\newcommand{\sbt}{\,\begin{picture}(-1,1)(-1,-3)\circle*{3}\end{picture}\ }
\newcommand{\C}{\mathbb{C}}
\newcommand{\Z}{\mathbb{Z}}
\newcommand{\op}{\operatorname}
\newcommand{\onto}{\twoheadrightarrow}
\newcommand{\dom}{\mathcal{P}^+}
\newtheorem{theorem}{Theorem}[section]
\newtheorem{example}[theorem]{Example}
\newtheorem{remark}[theorem]{Remark}
\newtheorem{conjecture}[theorem]{Conjecture}
\newtheorem{corollary}[theorem]{Corollary}
\newtheorem{question}[theorem]{Question}
\newtheorem{proposition}[theorem]{Proposition}
\newtheorem{lemma}[theorem]{Lemma}
\newtheorem{definition}[theorem]{Definition}
\newtheorem{definition/lemma}[theorem]{Definition/Lemma}
\DeclareMathOperator{\End}{End}
\title[Hochschild cohomology of flag varieties]{Irreducible components in Hochschild cohomology of flag varieties}
\author{Sam Jeralds}
\address{School of Mathematics and Statistics, University of Sydney, Camperdown, NSW 2050, AUS}
\email{samuel.jeralds@sydney.edu.au}
\begin{document}

\begin{abstract} 
Let $G$ be a simple, simply-connected complex algebraic group with Lie algebra $\mathfrak{g}$, and $G/B$ the associated complete flag variety. The Hochschild cohomology $HH^\bullet(G/B)$ is a geometric invariant of the flag variety related to its generalized deformation theory and has the structure of a $\mathfrak{g}$-module. We study this invariant via representation-theoretic methods; in particular, we give a complete list of irreducible subrepresentations in $HH^\bullet(G/B)$ when $G=SL_n(\mathbb{C})$ or is of exceptional type (and conjecturally for all types) along with nontrivial lower bounds on their multiplicities. These results follow from a conjecture due to Kostant on the structure of the tensor product representation $V(\rho) \otimes V(\rho)$.

\end{abstract}

\maketitle

\section{Introduction} \label{Intro}

Let $G$ be a finite-dimensional, simple, simply-connected complex algebraic group with Lie algebra $\mf[g]$, and let $G/B$ be the associated complete flag variety. Developing since roughly the mid-twentieth century, the primary aim of Geometric Representation Theory has been to relate the representation theory of $G$ to the geometry of spaces like the flag variety and generalizations or subvarieties thereof. This program has since found vast impact on major themes of research--with Kazhdan-Lusztig theory, the geometric Langlands program, and categorical representation theory being a few key examples.

While this field has developed into an impressive mathematical landscape, historically Geometric Representation Theory has sought to understand the interplay between the structure or construction of representations of $G$ and \emph{geometric or topological invariants} of $G/B$. Examples of this include the Littlewood--Richardson rule, which through combinatorial methods simultaneously computes structure constants for the singular cohomology of Grassmannians and the representation ring for $GL_n(\C)$; the Borel--Weil theorem, which uniformly constructs irreducible representations of $G$ via the cohomology of line bundles on $G/B$; or Beilinson--Bernstein's groundbreaking work on the geometry of D-modules.

We continue in the spirit of the classical approach by considering another geometric invariant: the Hochschild cohomology $HH^\bullet(G/B)$ of complete flag varieties. Originating from the study of associative algebras, Hochschild cohomology captures a ``generalized deformation" theory of varieties; we recall the basic definitions and properties of this invariant in Section \ref{Hoch}. Specifically, we highlight that $HH^\bullet(G/B)$ has a natural $\mf[g]$-module structure. Unlike many other generalized cohomology theories, the Hochschild cohomology of $G/B$ is not well understood; however, this has recently been a topic of increasing interest. In \cite{BS}, the Hochschild cohomology of certain partial flag varieties (referred to as \textit{generalized Grassmannians}) was explicitly computed using Lie algebra homology techniques. In \cite{HV}, a computer algorithm was described and implemented for computing the Hochschild cohomology of flag varieties using the BGG resolution, inspired by its connection to the center of small quantum groups as given in \cite{LQ1}, \cite{LQ2}. Beyond these results or few explicit examples, little is known about the structure of $HH^\bullet(G/B)$ in general.

\subsection{$HH^\bullet(G/B)$ and Kostant's $V(\rho) \otimes V(\rho)$ conjecture}
In this note, we make a first step in understanding $HH^\bullet(G/B)$ by exhibiting a novel connection to a conjecture of Kostant. Particularly, let $V(\rho)$ be the finite-dimensional, irreducible representation of $\mf[g]$ of highest weight $\rho$, the half-sum of positive roots (see Section \ref{bundles} for notational conventions). As part of his study on the Clifford algebras of semisimple Lie algebras $\mf[g]$ \cite{Kos}, Kostant found as $\mf[g]$-representations that 
$$
\bigwedge \nolimits^{\! \bullet} \mf[g] \cong 2^\ell \left( V(\rho) \otimes V(\rho) \right),
$$
where $\ell:=\op{rank}(\mf[g])$. Attempting to understand how this representation decomposes, Kostant made the following conjecture, first recorded in the work of Berenstein--Zelevinsky \cite{BZ}. 

\begin{conjecture} \label{Kconj} Let $\lambda \in \dom$ be a dominant integral weight. Then $V(\lambda) \subset V(\rho) \otimes V(\rho)$ is an irreducible subrepresentation if and only if $\lambda \leq 2\rho$ in the usual Bruhat--Chevalley order, or dominance order, on the set of weights. 
\end{conjecture}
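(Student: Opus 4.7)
For the forward direction, the argument is immediate. Every weight of $V(\rho) \otimes V(\rho)$ arises as a sum $\mu + \mu'$ of two weights of $V(\rho)$; since each of $\mu, \mu' \leq \rho$ in the dominance order, their sum is $\leq 2\rho$. In particular, the highest weight $\lambda$ of any irreducible summand $V(\lambda)$ is itself a weight of the tensor product and hence satisfies $\lambda \leq 2\rho$.

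The reverse direction is the substantive half. My first line of attack is the PRV theorem (Kumar, Mathieu): for every $w \in W$, the irreducible representation of highest weight $\overline{\rho + w\rho}$ (the unique dominant conjugate of $\rho + w\rho$) occurs in $V(\rho) \otimes V(\rho)$ with multiplicity at least one. This immediately produces a large family of summands, including $V(2\rho)$ from $w = e$, $V(0)$ from $w = w_0$, and $V(2\rho - \alpha_i)$ from each simple reflection. For type $A_{n-1}$, the resulting set of PRV weights admits a clean combinatorial description (sorted multisets $\{(n-1-i) + (n-1-\sigma(i))\}$ over $\sigma \in S_n$), and one aims to show---by a matching/assignment argument on such multisets---that these exhaust the dominant $\lambda \leq 2\rho$ lying in the root lattice. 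This would settle the conjecture for $G = SL_n$; an independent verification is available through the Littlewood--Richardson rule applied to the staircase partition $\rho = (n-1, n-2, \ldots, 0)$.

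For the exceptional types I would pursue two complementary routes. First, Kostant's identification $\bigwedge^\bullet \mathfrak{g} \cong 2^\ell\, V(\rho) \otimes V(\rho)$ from \cite{Kos} translates the question into identifying the isotypic components of $\bigwedge^\bullet \mathfrak{g}$, where the exterior-degree grading and the associated Clifford-algebra structure provide additional control (compare Kostant--Reeder analyses of $\bigwedge^k \mathfrak{g}$). Second, since only finitely many exceptional types remain and each decomposition is a finite-dimensional computation, character-theoretic verification in software such as LiE compares the computed decomposition of $V(\rho) \otimes V(\rho)$ against the enumerated poset of dominant $\lambda \leq 2\rho$. The main obstacle to a uniform, type-independent proof is precisely the potential gap between PRV-guaranteed summands and the full predicted list; I expect bridging this gap to require a new combinatorial or geometric model---via Littelmann paths, Kashiwara crystals, or Mirkovi\'c--Vilonen cycles---producing a highest weight vector of weight $\lambda$ in $V(\rho) \otimes V(\rho)$ directly from the inequality $\lambda \leq 2\rho$.
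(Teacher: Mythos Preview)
The statement you are attempting to prove is labeled a \emph{conjecture} in the paper and is not proved there. The paper records its status: the forward direction is noted as a ``necessary condition'' (your one-paragraph argument for it is correct and is essentially what the paper alludes to), while the reverse direction is attributed to Berenstein--Zelevinsky \cite{BZ} for $\mathfrak{sl}_n$, to direct computational verification for the exceptional types (reported in \cite{CKM}), and remains open in general, with only a saturated weakening known uniformly \cite{CKM}. There is therefore no ``paper's own proof'' to compare your proposal against.

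On the substance of your proposal: what you have written is a research outline, not a proof, and you say as much (``one aims to show'', ``I would pursue'', ``I expect bridging this gap to require''). Two concrete gaps deserve emphasis. First, your type~$A$ strategy via PRV components is speculative: the claim that the set $\{\overline{\rho + w\rho} : w \in W\}$ exhausts all dominant $\lambda \leq 2\rho$ in the root lattice is itself a nontrivial combinatorial statement that you have not established, and it is not the route Berenstein--Zelevinsky took (their argument goes through their polytope model for triple multiplicities). If PRV components did suffice in type~$A$, that would be an interesting alternative proof, but you would need to supply the matching argument rather than assert that one exists. Second, for the remaining classical types $B$, $C$, $D$ you offer nothing beyond the hope for a future crystal or MV-cycle construction; this is precisely the open part of the conjecture. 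Your exceptional-type plan (computer verification) matches what the paper reports has already been done.
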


\noindent Of course, the ``only if" direction of this conjecture is a necessary condition; that is, the conjecture can be restated as the sufficiency of the condition $\lambda \leq 2\rho$ for $V(\lambda) \subset V(\rho) \otimes V(\rho)$. 

When $\mf[g]=\mf[sl]_n(\C)$, Conjecture \ref{Kconj} was proven in the same paper of Berenstein--Zelevinsky \cite{BZ}. Much later, Chirivi--Kumar--Maffei \cite{CKM} proved a weaker version of this conjecture for all semisimple $\mf[g]$ involving a saturation factor, which recovers the known result for $\mf[sl]_n(\C)$. They also report in loc. cit. that Conjecture \ref{Kconj} had been confirmed to hold in the exceptional types via direct computational verification. Generalizations of this conjecture and corresponding results have recently been made to other branching problems \cite{NP} and to tensor products of affine Kac--Moody Lie algebra representations \cite{JK}.

While seemingly unrelated to Hochschild cohomology, our goal for the present work is to connect Kostant's conjecture to $HH^\bullet(G/B)$. We do this using familiar tools of representation theory, such as the Borel--Weil--Bott theorem, character formulas, and Euler--Poincar\'e characteristics of vector bundles on $G/B$. As a preliminary result, we have the following proposition (cf. Proposition \ref{embedprop2}), which holds independently of Conjecture \ref{Kconj}. 

\begin{proposition} \label{embeddingprop}
For any simple, simply-connected complex algebraic group $G$, there is an embedding of representations 
$$
V(\rho)\otimes V(\rho) \hookrightarrow HH^\bullet(G/B).
$$
\end{proposition}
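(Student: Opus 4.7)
The plan is to combine the Hochschild--Kostant--Rosenberg (HKR) decomposition of $HH^\bullet(G/B)$ with Borel--Weil--Bott, and then deduce the embedding from a positivity comparison between virtual and actual multiplicities.

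First, I would invoke HKR to write
$$
HH^n(G/B) \;\cong\; \bigoplus_{p+q=n} H^p\bigl(G/B,\, \bigwedge\nolimits^q T_{G/B}\bigr),
$$
so that the problem reduces to exhibiting $V(\rho)\otimes V(\rho)$ as a subrepresentation of the total sum $\bigoplus_{p,q} H^p(G/B, \bigwedge\nolimits^q T_{G/B})$ of $\mathfrak{g}$-modules.

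Next, I would consider the virtual $\mathfrak{g}$-module
$$
E \;:=\; \sum_{p,q}(-1)^p\bigl[H^p(G/B, \bigwedge\nolimits^q T_{G/B})\bigr] \;=\; \sum_q \chi\bigl(G/B,\, \bigwedge\nolimits^q T_{G/B}\bigr)
$$
in the representation ring of $\mathfrak{g}$. Because the fiber of $\bigwedge^\bullet T_{G/B}$ at $eB$ is the exterior algebra of $\mathfrak{g}/\mathfrak{b}$, with total $T$-character $\prod_{\alpha > 0}(1+e^{-\alpha})$, Borel--Weil--Bott expresses $E$ as a weighted sum of Weyl characters. Expanding via the Weyl character formula, and letting $D = \sum_w (-1)^{\ell(w)}e^{w\rho}$ denote the Weyl denominator, I would use the identities $\sum_w(-1)^{\ell(w)}e^{w(2\rho)} = D\cdot \op{ch}(V(\rho))$ and $\prod_{\alpha > 0}(e^{\alpha/2}+e^{-\alpha/2}) = \op{ch}(V(\rho))$ to telescope the Weyl--group sum, concluding
$$
E \;=\; \op{ch}(V(\rho))^2 \;=\; V(\rho)\otimes V(\rho)
$$
as virtual representations. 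As a consistency check, specializing to $\mathfrak{g} = \mathfrak{sl}_2$ recovers $HH^\bullet(\mathbb{P}^1) = V(0) \oplus V(\alpha) = V(\rho)\otimes V(\rho)$ directly.

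Finally, I would promote this virtual identity to an actual embedding by a positivity argument. For each irreducible $V(\lambda)$, writing $a^\lambda_{p,q}$ for its multiplicity in $H^p(G/B, \bigwedge\nolimits^q T_{G/B})$, the bound
$$
\sum_{p,q} a^\lambda_{p,q} \;\geq\; \sum_{p,q}(-1)^p a^\lambda_{p,q} \;=\; [V(\lambda) : V(\rho)\otimes V(\rho)]
$$
follows from $a^\lambda_{p,q} \geq 0$, so every irreducible constituent of $V(\rho)\otimes V(\rho)$ appears in $HH^\bullet(G/B)$ with at least the required multiplicity; semisimplicity of finite-dimensional $\mathfrak{g}$-modules then delivers the embedding. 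The main obstacle is the character calculation above: one must carefully track sign conventions in the Borel--Weil--Bott formula applied to the non-irreducible $B$-module $\bigwedge^\bullet(\mathfrak{g}/\mathfrak{b})$. By contrast, the individual groups $H^p(G/B, \bigwedge^q T_{G/B})$ at intermediate $(p,q)$ may remain mysterious, but happily never need to be computed.
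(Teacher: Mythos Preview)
Your proposal is correct and follows essentially the same strategy as the paper: compute the $T$-equivariant Euler--Poincar\'e characteristic of $\bigwedge^\bullet \mathcal{T}_{G/B}$, identify it with $\op{ch}(V(\rho)\otimes V(\rho))$, and then use the trivial positivity bound $\sum_{p,q} a^\lambda_{p,q} \geq \sum_{p,q}(-1)^p a^\lambda_{p,q}$ together with semisimplicity to upgrade the virtual identity to an honest embedding. The only difference is cosmetic, in how the character identity is established: the paper packages the computation via Demazure operators, using Brauer's formula $\op{ch}(V(\rho)\otimes V(\rho)) = D_{w_0}(e^\rho \op{ch}(V(\rho)))$ together with the Demazure expression $\chi_T(\mathcal{L}(M)) = \diamond D_{w_0}(\diamond \op{ch}(M))$, whereas you manipulate the Weyl character formula directly (exploiting that $e^\rho \prod_{\alpha>0}(1+e^{-\alpha}) = \op{ch}(V(\rho))$ is $W$-invariant so that the alternating sum over $W$ collapses against the factor $e^{2\rho}$). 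Both routes are short and yield the same identity; your version is arguably more self-contained, while the paper's Demazure formulation makes the link to the general Euler-characteristic machinery for $\mathcal{L}(M)$ more explicit.
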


For a dominant integral weight $\lambda \leq 2\rho$, let $m_{\lambda}$ be the multiplicity with which $V(\lambda)$ appears in $V(\rho) \otimes V(\rho)$. Essentially as a corollary to Proposition \ref{embeddingprop}, we have the following theorem, which is the main result of this note (cf. Theorem \ref{maintheorem2}). 

\begin{theorem} \label{maintheorem}
Assume that Conjecture \ref{Kconj} holds. Then for any dominant integral weight $\lambda \in \dom$, we have that $V(\lambda) \subset HH^\bullet(G/B)$ is an irreducible component if and only if $\lambda \leq 2\rho$, and furthermore appears with nonzero multiplicity at least $m_\lambda$. 

In particular, this result holds for the cases $G=SL_n(\C)$ or $G$ of exceptional type. 
\end{theorem}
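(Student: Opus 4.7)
The strategy is to split Theorem \ref{maintheorem} into two halves: the sufficiency of $\lambda \leq 2\rho$ (together with the multiplicity lower bound), which will follow essentially formally from Proposition \ref{embeddingprop} and Conjecture \ref{Kconj}; and the necessity, for which I would invoke the Hochschild--Kostant--Rosenberg (HKR) decomposition and analyze the resulting sheaf cohomology via Borel--Weil--Bott.

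For the ``if'' direction and the multiplicity bound, Proposition \ref{embeddingprop} supplies an embedding of $\mathfrak{g}$-representations $V(\rho)\otimes V(\rho) \hookrightarrow HH^\bullet(G/B)$. Assuming Conjecture \ref{Kconj}, each dominant $\lambda \leq 2\rho$ occurs in $V(\rho)\otimes V(\rho)$ with some multiplicity $m_\lambda \geq 1$; composing with the embedding immediately guarantees that $V(\lambda)$ appears in $HH^\bullet(G/B)$ with multiplicity at least $m_\lambda$.

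For the ``only if'' direction, I would invoke the HKR decomposition
\[
HH^n(G/B) \;\cong\; \bigoplus_{p+q=n} H^p\!\left(G/B,\; \bigwedge\nolimits^{q} T_{G/B}\right)
\]
together with the identification $\bigwedge^q T_{G/B} \cong G \times_B \bigwedge^q(\mathfrak{g}/\mathfrak{b})$ as $G$-equivariant vector bundles. Since $\mathfrak{g}/\mathfrak{b}$ has $B$-weights equal to the positive roots, the $B$-weights of $\bigwedge^q(\mathfrak{g}/\mathfrak{b})$ are sums of $q$ distinct positive roots, all bounded above by $2\rho$ in the dominance order. The weight filtration of this $B$-module and the attached Borel--Weil--Bott spectral sequence then exhibit every $G$-irreducible summand $V(\nu) \subset H^\bullet(G/B, \bigwedge^q T_{G/B})$ as $V(w \cdot \mu)$ for some such weight $\mu$ and some $w \in W$ making $w \cdot \mu = w(\mu+\rho)-\rho$ dominant.

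The main obstacle is then the Weyl-combinatorial claim that $w \cdot \mu \leq 2\rho$ whenever $\mu$ is a sum of distinct positive roots and $w(\mu+\rho)$ is dominant regular. I expect to establish this by combining the trivial observation $\mu + \rho \leq 3\rho$ with a direct analysis of dominant $W$-translates of $\mu+\rho$ relative to the chamber anchored at $3\rho$; an alternative route would be to use Serre duality to swap $\bigwedge^q T_{G/B}$ with $\bigwedge^{n-q} T_{G/B} \otimes K_{G/B}$ and pivot around the extremal case $\bigwedge^n T_{G/B} \cong \mathcal{L}_{2\rho}$, where $H^0$ manifestly realizes $V(2\rho)$ as the largest-highest-weight summand. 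Once this lemma is in place the ``only if'' direction follows, and the ``in particular'' clause is then immediate from the known cases of Conjecture \ref{Kconj} for $G = SL_n(\C)$ \cite{BZ} and $G$ of exceptional type \cite{CKM}.
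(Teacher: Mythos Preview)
Your overall architecture matches the paper exactly: the ``if'' direction and the multiplicity bound come straight from Proposition~\ref{embeddingprop} plus Conjecture~\ref{Kconj}, and for the ``only if'' direction the paper also passes through HKR, identifies $\bigwedge^{\bullet}\mathcal{T}_{G/B}$ with an associated bundle, filters by weights, and reduces to a Borel--Weil--Bott statement (this is Lemma~\ref{cohom-support} and Lemma~\ref{supset} in the paper). So structurally you are doing precisely what the paper does.

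The one place where your proposal is genuinely incomplete is the ``Weyl-combinatorial claim'' that $w\sbt\mu \leq 2\rho$ whenever $\mu$ is a sum of distinct positive roots and $w\sbt\mu$ is dominant. You flag this as the main obstacle and offer only a heuristic (``$\mu+\rho\leq 3\rho$ plus a direct analysis'', or a Serre-duality pivot); neither of these is an argument, and the naive bound $\mu+\rho\leq 3\rho$ by itself does not control $w(\mu+\rho)$. The paper dispatches this in one line using a fact you have not invoked: by the Weyl-type identity $\op{ch}(V(\rho)) = e^{\rho}\prod_{\beta\in\Phi^+}(1+e^{-\beta})$ (Lemma~\ref{rhochar}), every sum of distinct positive roots is of the form $\rho+\gamma$ with $\gamma\in\op{wt}(V(\rho))$. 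Writing $\mu=\rho+\gamma$ gives
\[
2\rho - w\sbt\mu \;=\; 2\rho - \bigl(w(2\rho+\gamma)-\rho\bigr) \;=\; \bigl(2\rho - w(2\rho)\bigr) + \bigl(\rho - w(\gamma)\bigr),
\]
and both summands lie in $\mathcal{Q}^+$ because $2\rho$ is dominant and $w(\gamma)$ is again a weight of $V(\rho)$. This is the missing idea; once you insert it, your proof is complete and coincides with the paper's. (Minor point: under the paper's conventions $\mathfrak{g}/\mathfrak{b}\cong\mathfrak{u}$ carries the \emph{negative} roots as $B$-weights, not the positive ones; the sign is absorbed by the convention for $\mathcal{L}(-)$ in Lemma~\ref{cohom-support}, so your conclusion is unaffected.)
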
 

While the assumption that $G$ is simply-connected is typically necessary for considering the representation $V(\rho)$ and Kostant's conjecture, this assumption can be dropped for Theorem \ref{maintheorem}; see Remark \ref{SimpConn}.

\subsection{Outline of paper} 
We now give an outline of the remainder of this paper. In Section \ref{bundles}, after defining our notations and conventions, we review the construction of equivariant vector bundles on $G/B$, their cohomology, and their connections to representations via their Euler--Poincar\'e characteristic and the Borel--Weil--Bott theorem. In Section \ref{Hoch}, we recall the basic definitions and properties of Hochschild cohomology of smooth projective varieties, quickly restricting to the case of $G/B$,  and its relationship to the sheaf of polyvector fields. Finally, in Section \ref{embedding} we give proofs of Proposition \ref{embeddingprop} and Theorem \ref{maintheorem} along with discussion for further topics of interest.

\subsection*{Acknowledgements} We thank Marc Besson and Josh Kiers for many helpful conversations and comments throughout the inception of this project, and Pieter Belmans for insightful correspondence on Hochschild cohomology and corrections to an earlier draft.

\section{Vector bundles on $G/B$ and the Borel--Weil--Bott theorem} \label{bundles}

We recall in this section the construction of $G$-equivariant vector bundles on flag varieties, with particular focus on the case of line bundles and the Borel--Weil--Bott theorem, and their Euler-Poincar\'e characteristics. All results of this section are well-established in the literature; we include this exposition both as a convenience for the reader as well as to fix our choice of conventions from the slew of differing-but-equivalent ones which appear elsewhere. We follow most closely the conventions of \cite{Ku2}, restricted to the finite-dimensional setting. 

To this end, let $G$ be a finite-dimensional, simple, simply-connected complex algebraic group. We also fix a maximal torus and Borel subgroup $T \subset B \subset G$. The corresponding Lie algebras are denoted $\mf[h] \subset \mf[b] \subset \mf[g]$, respectively. Let $N \subset B$ be the maximal unipotent subgroup, and denote by $\mf[n]:=[\mf[b], \mf[b]]$ its Lie algebra. The choice of maximal torus and Borel fixes a set of positive roots $\Phi^+$ in the root system $\Phi$ for $G$; we emphasize that $B$ (equivalently, $\mf[b]$) will be associated to this set of \emph{positive} roots. We also have the maximal unipotent group of $G$ opposite to $N$, which we denote by $U$, and its Lie algebra $\mf[u]$; these will thus be associated to the negative roots $\Phi^-=-\Phi^+$. 

Let $\{\alpha_i\}_{i=1}^{\ell}$ be the set of simple roots, where we set $\ell:= \op{rank}(G)$. These span the root lattice $\mc[Q]:= \bigoplus_{i=1}^\ell \Z \alpha_i$. We also denote $\mc[Q]^+:= \bigoplus_{i=1}^\ell \Z_{\geq 0} \alpha_i$, the positive root lattice. We similarly have the simple coroots $\{ \alpha_i^\vee\}_{i=1}^\ell$. We denote by $\{\varpi_i\}_{i=1}^\ell$ the fundamental weights, defined by $\varpi_i(\alpha_j^\vee)=\delta_{i,j}$. These span the integral weight lattice $\mc[P]:= \bigoplus_{i=1}^\ell \Z \varpi_i$; we denote by $\mc[P]^+:= \bigoplus_{i=1}^\ell \Z_{\geq 0} \varpi_i$ the set of dominant integral weights. We highlight in particular the dominant integral weight $\rho:=\frac{1}{2}\sum_{\beta \in \Phi^+} \beta = \sum_{i=1}^\ell \varpi_i$, the Weyl vector. 

To any dominant integral weight $\lambda \in \dom$, we can associate the finite-dimensional, irreducible $\mf[g]$-representation $V(\lambda)$ of highest weight $\lambda$; since $G$ is simply-connected, we can equivalently view this as a $G$-representation (and often make this association). These representations have weight space decompositions with respect to the $\mf[h]$-action (equivalently, $T$-action) given by 
$$
V(\lambda)=\bigoplus_{\mu \in \mc[P]} V(\lambda)_\mu.
$$
More generally, for any finite-dimensional module $M$ on which $T$ acts semisimply, we denote by $M_\mu$ the $\mu$ weight space, and set $\op{wt}(M):= \{\mu \in \mc[P]: M_\mu \neq 0\}$ and $\op{ch}(M):= \sum_{\mu \in \op{wt}(M)} \left(\op{dim} M_\mu \right) e^\mu$ the set of weights and the character of $M$, respectively. 

Finally, we let $W$ be the Weyl group of $G$, generated by simple reflections $\{s_i\}_{i=1}^\ell$ associated to each simple root. Then $W$ naturally acts on the weight lattice $\mc[P]$, which we denote via $w(\lambda)$ for $w \in W$ and $\lambda \in \mc[P]$. A second action of $W$ on $\mc[P]$ is the \emph{shifted action} (also called the \emph{dot action}), which is given by 
$$
w \sbt \lambda := w(\lambda+\rho)-\rho, 
$$
where as before $\rho$ is the Weyl vector.

\subsection{Vector bundles on $G/B$ and the Borel--Weil--Bott theorem}
We now give a construction for vector bundles on $G/B$ via the association 
$$
\{\text{finite-dimensional } B\text{-modules}\} \iff \{\text{finite-dimensional } G\text{-equivariant bundles on } G/B\},
$$
which we sketch as follows. Let $M$ be a finite-dimensional $B$-module, and define the right action of $B$ on $G \times M$ by 
$$
(g, m).b := (gb, b^{-1}(m)).
$$
With this action, set $G \times_B M := (G \times M)/B$. Then for any equivalence class $[g, m] \in G \times_B M$, the projection map $G \times_B M \to G/B$ given by $[g, m] \mapsto gB$ is a vector bundle map with fiber $M$, which we denote as in the following definition.

\begin{definition} For a finite-dimensional $B$-module $M$, we denote by $\mc[L](M) := G \times_B M$ the vector bundle on $G/B$ as in the previous construction. 
\end{definition}

The total space $G \times_B M$ of $\mc[L](M)$ has a natural left $G$-action, given by $g.[g', m]:=[gg', m]$, which is compatible with the bundle map $G \times_B M \to G/B$. Thus, $\mc[L](M)$ is a $G$-equivariant vector bundle on $G/B$. Associating $\mc[L](M)$ with its sheaf of sections, we get a $G$-module structure on the cohomology groups $H^i (G/B, \mc[L](M))$. A central topic of interest is how these cohomology groups decompose as irreducible $G$-modules for various vector bundles $\mc[L](M)$. 

We next specialize to the case when $M$ is one-dimensional, so that $\mc[L](M)$ is a line bundle. Fix a weight $\lambda \in \mc[P]$, and extend this to a character of $B$ via $\lambda(N)=1$. Then set $\C_\lambda$ to be the $B$-module given by $b(z):=\lambda(b)z$. Using the general construction above, we construct $G$-equivariant line bundles $\mc[L](\lambda)$ on $G/B$ as follows. 

\begin{definition} The line bundle $\mc[L](\lambda)$ on $G/B$ is given by $\mc[L](\lambda):= \mathcal{L}(\C_{-\lambda}) = G \times_B \C_{-\lambda}$. 
\end{definition}

\begin{remark} We highlight that we have introduced a negative sign in the definition of $\mc[L](\lambda)$, for convenience in stating later results. 
\end{remark}

For these line bundles, the cohomology groups $H^i(G/B, \mc[L](\lambda))$ are completely understood by the work of Borel, Weil, and the later work of Bott. These results are collectively known as the Borel--Weil--Bott theorem, which we record here. 

\begin{theorem} \label{BWB} 
\begin{enumerate}
\item Let $\lambda \in \dom$ be a dominant integral weight, and let $w \in W$. Then 
$$
H^i(G/B, \mc[L](w \sbt \lambda)) = \begin{cases} V(\lambda)^\ast & i = l(w), \\ 0 & \text{otherwise,} \end{cases}
$$
where $l(w)$ is the length of $w$ in the Weyl group, and $V(\lambda)^\ast$ is the dual representation of $G$ to $V(\lambda)$. 

\item If $\lambda \in \mc[P]$ such that $w \sbt \lambda \not \in \dom$ for any $w \in W$, then $H^i(G/B, \mc[L](\lambda)) = 0$ for all $i \geq 0$. 
\end{enumerate}
\end{theorem}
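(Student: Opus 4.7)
The plan is to reduce the general statement to the classical Borel--Weil case ($\lambda \in \dom$, $w = e$ in part (1)) by induction on the length of $w$, using the geometry of the minimal parabolic $\mathbb{P}^1$-bundles $\pi_i \colon G/B \to G/P_i$ attached to the simple roots $\alpha_i$.

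For the base case, I would identify $H^0(G/B, \mc[L](\lambda))$ with the space of $B$-equivariant functions $G \to \C_{-\lambda}$ and apply the algebraic Peter--Weyl theorem (equivalently, Frobenius reciprocity) to obtain $H^0(G/B, \mc[L](\lambda)) \cong V(\lambda)^*$ for $\lambda \in \dom$. The higher vanishing $H^i(G/B, \mc[L](\lambda)) = 0$ for $i > 0$ and $\lambda$ dominant is Kempf vanishing; in our characteristic-zero setting it follows from Kodaira-type vanishing, using that $\mc[L](\rho)$ is ample on $G/B$ and that the canonical bundle is $K_{G/B} \cong \mc[L](-2\rho)$.

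The inductive machinery rests on a local computation along the fibers of each $\pi_i$, which are isomorphic to $\mathbb{P}^1 \cong P_i/B$. For an arbitrary weight $\mu$, the restriction of $\mc[L](\mu)$ to a fiber is a line bundle of the form $\mathcal{O}_{\mathbb{P}^1}(c)$ with $c$ a linear function of $\langle \mu, \alpha_i^\vee \rangle$, whose cohomology on $\mathbb{P}^1$ is explicit. Combining proper base change with the Leray spectral sequence $E_2^{p,q} = H^p(G/P_i, R^q \pi_{i*} \mc[L](\mu)) \Rightarrow H^{p+q}(G/B, \mc[L](\mu))$ yields the key trichotomy in terms of $n := \langle \mu + \rho, \alpha_i^\vee \rangle$: if $n = 0$ then $H^\bullet(G/B, \mc[L](\mu)) = 0$; if $n > 0$ then $R^1 \pi_{i*} \mc[L](\mu) = 0$ and no degree shift occurs; and if $n < 0$ then there is a natural isomorphism $H^i(G/B, \mc[L](\mu)) \cong H^{i-1}(G/B, \mc[L](s_i \sbt \mu))$ for every $i$. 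This last shift is the essential inductive tool.

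To conclude, I split into cases on $\mu \in \mc[P]$. If the dot-orbit $W \sbt \mu$ contains no dominant weight, then $\mu + \rho$ lies on a root hyperplane and some simple reflection fixes $\mu$ under $\sbt$; applying the first case of the trichotomy yields part (2). Otherwise there is a unique $\lambda \in \dom$ and unique $w \in W$ with $\mu = w \sbt \lambda$; fixing a reduced expression $w = s_{i_1} \cdots s_{i_{l(w)}}$, each simple reflection in turn triggers the third case and shifts the cohomological degree down by one, iterating $l(w)$ times to identify $H^{l(w)}(G/B, \mc[L](\mu))$ with $H^0(G/B, \mc[L](\lambda)) = V(\lambda)^*$ while forcing the other $H^i$ to vanish. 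The main obstacle is the careful bookkeeping along the induction: one must verify that the third case of the trichotomy (rather than the first or second) applies at every step of a reduced expression for $w$, which reduces via the substitution $\mu \mapsto \mu + \rho$ to the standard length/exchange combinatorics for reduced words in $W$ moving through the Weyl chambers toward the regular dominant chamber.
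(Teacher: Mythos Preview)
The paper does not prove Theorem~\ref{BWB}; it is stated as the classical Borel--Weil--Bott theorem and simply ``recorded'' for later use, with no proof or sketch given. So there is no proof in the paper to compare your proposal against.

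That said, your outline is essentially Demazure's standard proof and is correct in its broad strokes. One small point to tighten: in your argument for part~(2) you write that if $W \sbt \mu$ contains no dominant weight then ``some simple reflection fixes $\mu$ under~$\sbt$.'' This is not literally true for the given $\mu$: singularity of $\mu+\rho$ only guarantees $\langle \mu+\rho,\beta^\vee\rangle=0$ for some root $\beta$, not a simple one. The correct argument is to first move $\mu+\rho$ toward the closure of the dominant chamber by a sequence of simple reflections (each either an identity on cohomology or a degree shift via your third case), and observe that at some stage one necessarily lands on a simple wall, where the $n=0$ case of the trichotomy gives vanishing. You essentially acknowledge this in your final paragraph about bookkeeping, but the sentence as written for part~(2) is too quick.

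Also be mindful of the paper's sign convention $\mc[L](\lambda)=G\times_B\C_{-\lambda}$ when you identify the degree of $\mc[L](\mu)$ on a fiber $P_i/B\cong\mathbb{P}^1$; with this convention the degree is $\langle\mu,\alpha_i^\vee\rangle$, so your trichotomy in terms of $n=\langle\mu+\rho,\alpha_i^\vee\rangle$ is consistent.
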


In particular, for any $\lambda \in \mc[P]$ the line bundle $\mc[L](\lambda)$ has at most one nontrivial cohomology group, which has an explicit description as a $G$-representation. We restate this in the following nonstandard fashion, which will be useful when considering more general vector bundles.

\begin{corollary} \label{line-bundle-support}
 Let $H^\bullet(G/B, \mc[L](\lambda))$ be the total cohomology of $\mc[L](\lambda)$, and suppose $\mu \in \dom$ such that $V(\mu) \subset H^\bullet(G/B, \mc[L](\lambda))$ as a subrepresentation (hence $H^\bullet(G/B, \mc[L](\lambda))$ is nontrivial). Then $\mu^\ast \in W \sbt \lambda$, where $\mu^\ast := -w_0\mu$ is the highest weight of $V(\mu)^\ast$, $w_0 \in W$ is the longest word of the Weyl group, and $W \sbt$ denotes the shifted orbit under $W$. 
\end{corollary}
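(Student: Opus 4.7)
The plan is to reduce directly to Theorem \ref{BWB} (Borel--Weil--Bott) via a case analysis on whether the weight $\lambda$ is regular or singular with respect to the shifted Weyl action. Since $V(\mu)$ is assumed to embed in $H^\bullet(G/B, \mathcal{L}(\lambda))$, the total cohomology is nonzero, so part (2) of Theorem \ref{BWB} rules out the case that $w \sbt \lambda \notin \dom$ for every $w \in W$. Hence there exists some $w \in W$ (unique, since the stabilizer of any element under the shifted action is trivial precisely when $\lambda + \rho$ is regular) with $\nu := w \sbt \lambda \in \dom$.

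Applying part (1) of Theorem \ref{BWB} to this $\nu$, I would conclude that $H^\bullet(G/B, \mathcal{L}(\lambda)) = H^{l(w)}(G/B, \mathcal{L}(w \sbt \nu')) = V(\nu)^\ast$, concentrated in a single degree, where I am reading the statement of Borel--Weil--Bott as identifying $\lambda$ with $w \sbt \nu$ for the dominant $\nu$. Since $V(\nu)^\ast$ is itself irreducible as a $G$-module with highest weight $\nu^\ast = -w_0 \nu$, the assumption $V(\mu) \subset V(\nu)^\ast$ forces the identification $V(\mu) = V(\nu)^\ast$, and comparing highest weights yields $\mu = \nu^\ast$.

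Finally, I would unpack this: since $w_0$ is an involution, $\mu = -w_0 \nu$ gives $\mu^\ast = -w_0 \mu = \nu = w \sbt \lambda$, exhibiting $\mu^\ast$ as an element of the shifted orbit $W \sbt \lambda$, as desired.

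There is not really a significant obstacle here; the statement is essentially a repackaging of Borel--Weil--Bott that foregrounds the orbit condition on $\mu^\ast$. The only subtlety worth being careful about is keeping the conventions straight: the sign in the definition of $\mathcal{L}(\lambda) = G \times_B \mathbb{C}_{-\lambda}$, the appearance of the dual $V(\lambda)^\ast$ in Theorem \ref{BWB}, and the definition $\mu^\ast = -w_0 \mu$ all need to be combined consistently so that the final displayed conclusion lands on $\mu^\ast$ (rather than $\mu$) belonging to $W \sbt \lambda$.
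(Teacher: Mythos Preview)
Your proposal is correct and matches the paper's treatment: the paper presents this corollary as an immediate restatement of Theorem \ref{BWB} without a separate proof, and your argument is precisely the natural unpacking of that restatement. The only cosmetic issues are the stray $\nu'$ (presumably a typo for $\nu$) and the fact that from $\nu = w \sbt \lambda$ one has $\lambda = w^{-1} \sbt \nu$ rather than $w \sbt \nu$, so the relevant length in Borel--Weil--Bott is $l(w^{-1})$; since $l(w^{-1}) = l(w)$ and only membership in the full shifted orbit $W \sbt \lambda$ is at stake, this does not affect the conclusion.
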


Now, we return to the case of vector bundles $\mc[L](M)$ on $G/B$. Unlike for line bundles, the total cohomology $H^\bullet(G/B, \mc[L](M))$ need not have only one nontrivial summand, nor is it in general the case that $H^i(G/B, \mc[L](M))$ is isomorphic to a single irreducible $G$-representation. Similarly, since a general $B$-module $M$ need not be completely reducible, we cannot typically decompose $\mc[L](M)$ as a direct sum of line bundles to immediately apply the Borel--Weil--Bott theorem. However, we can introduce a filtration $F^\bullet(M)$ on $M$ such that successive quotients \emph{are} completely-reducible $B$-modules; that is, for the associated graded module we have 
$$
\op{gr}(M) = \bigoplus_{\xi_i} \C_{\xi_i}
$$
for some multiset of weights $\{\xi_i\}$, where each $\xi_i \in \op{wt}(M)$. This allows us to prove the following useful lemma, generalizing Corollary \ref{line-bundle-support} to vector bundles, which gives candidates for irreducible representations $V(\mu)$ appearing in $H^\bullet(G/B, \mc[L](M))$. 

\begin{lemma} \label{cohom-support}
Let $M$ be a finite-dimensional $B$-module, and let $\mu \in \dom$ such that $V(\mu) \subset H^\bullet(G/B, \mc[L](M))$ is a subrepresentation. Then
$$
\mu^\ast \in W \sbt \{-\op{wt}(M)\}.
$$
\end{lemma}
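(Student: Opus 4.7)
The plan is to reduce from a general finite-dimensional $B$-module $M$ to the line-bundle case, where the lemma is exactly Corollary \ref{line-bundle-support}. I would induct on $\dim_\C M$, using the filtration $F^\bullet(M)$ recalled just before the lemma, whose successive quotients are one-dimensional $B$-modules $\C_{\xi_i}$ with $\xi_i \in \op{wt}(M)$; the base case $\dim_\C M = 1$ is immediate from Corollary \ref{line-bundle-support}.

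For the inductive step, pick a $B$-submodule $M' \subset M$ of codimension one, so that $M/M' \cong \C_\xi$ with $\xi \in \op{wt}(M)$. Since the associated-bundle construction $N \mapsto G \times_B N$ is an exact functor from finite-dimensional $B$-modules to $G$-equivariant sheaves on $G/B$, applying it to the short exact sequence $0 \to M' \to M \to \C_\xi \to 0$ produces a short exact sequence
$$ 0 \,\to\, \mc[L](M') \,\to\, \mc[L](M) \,\to\, \mc[L](-\xi) \,\to\, 0 $$
of $G$-equivariant sheaves on $G/B$ (remembering the sign convention $\mc[L](\lambda) = G \times_B \C_{-\lambda}$, so $\mc[L](\C_\xi) = \mc[L](-\xi)$). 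The induced long exact sequence in sheaf cohomology consists of finite-dimensional $G$-modules, and I would track the given irreducible $V(\mu) \subset H^\bullet(G/B, \mc[L](M))$ by passing to the $V(\mu)$-isotypic component. Complete reducibility of finite-dimensional $G$-modules (valid because $G$ is reductive) then forces $V(\mu)$ to embed as a subrepresentation of either $H^\bullet(G/B, \mc[L](M'))$ or $H^\bullet(G/B, \mc[L](-\xi))$.

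In the first case the inductive hypothesis gives $\mu^\ast \in W \sbt \{-\op{wt}(M')\} \subset W \sbt \{-\op{wt}(M)\}$; in the second, Corollary \ref{line-bundle-support} applied to $\mc[L](-\xi)$ gives $\mu^\ast \in W \sbt (-\xi) \subset W \sbt \{-\op{wt}(M)\}$. Either way the claimed containment holds. The only mildly subtle point is the isotypic bookkeeping that transfers a subrepresentation of $H^\bullet(G/B, \mc[L](M))$ into a subrepresentation of a flanking term in the long exact sequence, but this is automatic once the semisimplicity of finite-dimensional $G$-representations is invoked; no genuinely hard step is anticipated.
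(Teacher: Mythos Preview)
Your proof is correct and follows essentially the same strategy as the paper: both reduce to the line-bundle case via the filtration $F^\bullet(M)$ and then invoke Corollary \ref{line-bundle-support}. The paper packages this as a single surjection $\bigoplus_{\xi_i} H^\bullet(G/B, \mc[L](-\xi_i)) \twoheadrightarrow H^\bullet(G/B, \mc[L](M))$ (coming from the filtration together with semisimplicity of $G$-modules), whereas you unwind it inductively one step at a time via long exact sequences; the underlying content is the same.
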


\begin{proof}
As in the preceding discussion, fix a filtration $F^\bullet(M)$ of $M$ such that the associated graded $\op{gr}(M)$ is completely reducible, with $\op{gr}(M) = \bigoplus_{\xi_i} \C_{\xi_i}$. Then at the level of cohomology, this gives a surjection 
$$
\bigoplus_{\xi_i} H^\bullet(G/B, \mc[L](-\xi_i)) \onto H^\bullet(G/B, \mc[L](M));
$$
note again that by our conventions we introduce the signs in $\mc[L](-\xi_i)$. Then if $V(\mu)$ is a subrepresentation of $H^\bullet(G/B, \mc[L](M))$, it must also arise as a subrepresentation of $\bigoplus_{\xi_i} H^\bullet(G/B, \mc[L](-\xi_i))$. Since $V(\mu)$ is irreducible, in fact it is a subrepresentation of (and hence isomorphic to) a single $H^\bullet(G/B, \mc[L](-\xi_i))$, so that by Corollary \ref{line-bundle-support} we have $\mu^\ast \in W \sbt \{-\xi_i\} \subset W \sbt \{-\op{wt}(M)\}$, as desired. 
\end{proof}

\begin{remark} Typically, the map $\bigoplus_{\xi_i} H^\bullet(G/B, \mc[L](-\xi_i)) \onto H^\bullet(G/B, \mc[L](M))$ has nontrivial kernel, so that in general we do not have $H^\bullet(G/B, \mc[L](M)) \cong \bigoplus_{\xi_i} H^\bullet(G/B, \mc[L](-\xi_i))$. The kernel of this map can be computed using spectral sequence methods or the algorithm of Lachowska--Qi as implemented in \cite{HV}; however, we will not have need for that information here. 
\end{remark}

\subsection{Euler--Poincar\'e characteristic of $\mc[L](M)$} \label{eulercharsec}
For a finite-dimensional $G$-module $V$, we denoted by 
$$
\op{ch}(V) = \sum_{\mu \in \op{wt}(V)} \left( \op{dim}(V_\mu)\right) e^\mu
$$ 
the $T$-character of $V$. We consider characters $\op{ch}(V)$ to live in the ring $\Z[\mc[P]]$, where we associate $\mu \in \mc[P]$ to $e^\mu$. Later, we will need also the involution $\diamond: \Z[\mc[P]] \to \Z[\mc[P]]$, which is given by $\diamond(e^\mu) := e^{-\mu}$ and extended linearly.

 As we would like to understand the representation structure of the cohomology groups $H^i(G/B, \mc[L](M))$, a natural approach would be to consider their characters. However, like the representation themselves, we do not have a clear way to compute $\op{ch}(H^i(G/B, \mc[L](M)))$ directly. Nevertheless, there is a uniform computation for their Euler--Poincar\'e characteristic, which we define below. 

\begin{definition} Given a vector bundle $\mc[L](M)$ on $G/B$, the Euler--Poincar\'e characteristic of $\mc[L](M)$ is given by 
$$
\chi_T(G/B, \mc[L](M)):= \sum_{i \geq 0} (-1)^i \op{ch}(H^i(G/B, \mc[L](M))) \in \Z[\mc[P]].
$$
We abuse notation and also write $\chi_T (\mc[L](M))$, when no confusion arises. 
\end{definition}

While seemingly more involved, the Euler--Poincar\'e characteristic $\chi_T(\mc[L](M))$ is much easier to compute than the character of $H^\bullet(G/B, \mc[L](M))$. This comes through the work of Demazure \cite{Dem} on the more general setting of line and vector bundles on Schubert varieties, of which $G/B$ is a particular example. We do not develop this narrative in its entirety (see for instance \cite{Ku2}*{Chapter 8} and references therein), but only introduce what is necessary for our purposes. We begin by recalling the simple Demazure operators and their action on $\Z[\mc[P]]$. 

\begin{definition} Let $1 \leq i \leq \ell$. Then the Demazure operator $D_i: \Z[\mc[P]] \to \Z[\mc[P]]$ acts via 
$$
D_i(f):= \frac{f-e^{-\alpha_i}s_i(f)}{1-e^{-\alpha_i}}. 
$$
\end{definition}

One can check that the image of $D_i$ is again in $\Z[\mc[P]]$ and that $D_i^2=D_i$. A less immediate but classical property is that the collection of operators $\{D_i\}_{i=1}^\ell$ satisfy the braid relations coming from the Weyl group $W$ (cf. \cite{Ku2}*{Corollary 8.2.10}). Thus, for any $w \in W$, we get a well-defined operator $D_w$ by considering reduced expressions: if $w=s_{i_1} s_{i_1} \cdots s_{i_k}$ is a reduced word, then we can set $D_w:=D_{i_1}D_{i_2}\cdots D_{i_k}$. 

For $w_0 \in W$ the longest word, fix the operator $D_{w_0}$. Recall the involution $\diamond$ on $\Z[\mc[P]]$ from the beginning of this subsection. Then the following proposition is a special case of the more general \emph{Demazure character formula}. 

\begin{proposition} \label{Euler-char}
Let $M$ be a $B$-module and $\mc[L](M)$ the associated vector bundle on $G/B$. Then the Euler--Poincar\'e characteristic $\chi_T(\mc[L](M))$ is given by 
$$
\chi_T(\mc[L](M)) = \diamond(D_{w_0}(\diamond(\op{ch}(M)))).
$$
\end{proposition}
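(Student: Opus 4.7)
The plan is to reduce the claim to the line-bundle case via a $B$-stable filtration of $M$, and then to verify the line-bundle case using Borel-Weil-Bott (Theorem \ref{BWB}) together with the standard Demazure character formula $D_{w_0}(e^\lambda) = \op{ch}(V(\lambda))$ for $\lambda$ dominant. Both steps rely on the $\Z$-linearity of $D_{w_0}$ and $\diamond$ and on the additivity of $\chi_T$ along short exact sequences of equivariant sheaves.

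First I would handle the line-bundle case. For $\lambda \in \mc[P]$, since $\op{ch}(\C_{-\lambda}) = e^{-\lambda}$, the claim reduces to $\chi_T(\mc[L](\lambda)) = \diamond(D_{w_0}(e^\lambda))$. The key computational input is the identity $D_{w_0}(e^{s_i \sbt \mu}) = -D_{w_0}(e^\mu)$, which I would derive as follows: from the defining formula for $D_i$ and the equality $-\alpha_i + s_i(\mu) = s_i \sbt \mu$, one obtains $D_i(e^{s_i \sbt \mu}) = -D_i(e^\mu)$; factoring $w_0 = (w_0 s_i) \cdot s_i$ reducedly (valid since $l(w_0 s_i) = l(w_0) - 1$ for every simple $i$) then propagates this sign to $D_{w_0}$. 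Iterating along a reduced expression of any $w \in W$ yields $D_{w_0}(e^\lambda) = (-1)^{l(w)} D_{w_0}(e^{w \sbt \lambda})$, and the specialization $s_i \sbt \lambda = \lambda$ forces $D_{w_0}(e^\lambda) = 0$. Matching the two resulting cases against Theorem \ref{BWB} and using $\op{ch}(V(\mu)^\ast) = \diamond(\op{ch}(V(\mu)))$ completes the line-bundle case.

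For the general case, I would fix a filtration $0 = F^0(M) \subsetneq \cdots \subsetneq F^n(M) = M$ of $M$ by $B$-submodules with one-dimensional successive quotients $\C_{\xi_i}$, which exists because $N \subset B$ is unipotent while $T$ acts semisimply; this gives $\op{ch}(M) = \sum_i e^{\xi_i}$. Applying the exact functor $G \times_B (-)$ produces a corresponding filtration of $\mc[L](M)$ by equivariant sub-bundles with quotients $\mc[L](-\xi_i)$, and the associated long exact sequences in cohomology, combined with additivity of $\chi_T$, yield $\chi_T(\mc[L](M)) = \sum_i \chi_T(\mc[L](-\xi_i))$. By $\Z$-linearity of $\diamond \circ D_{w_0} \circ \diamond$ and the line-bundle case just established, this equals $\diamond(D_{w_0}(\diamond(\op{ch}(M))))$. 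The main obstacle in the whole argument is the Demazure character formula at $w_0$, namely $D_{w_0}(e^\lambda) = \op{ch}(V(\lambda))$ for $\lambda$ dominant, which is the deepest input and is typically proved via the geometry of Schubert varieties or an inductive study of the $D_i$; I would invoke it directly from \cite{Ku2}*{Chapter 8} rather than reprove it here.
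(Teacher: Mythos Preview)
The paper does not actually supply a proof of this proposition; it is stated as a special case of the Demazure character formula with a reference to \cite{Dem} and \cite{Ku2}*{Chapter 8}, and the surrounding text explicitly declines to develop the argument. So there is no ``paper's own proof'' to compare against.

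Your sketch is correct and follows the standard route one finds in the references the paper cites. The reduction to line bundles via a $B$-stable flag with one-dimensional quotients, together with additivity of $\chi_T$ on short exact sequences, is exactly how one passes from line bundles to arbitrary finite-dimensional $B$-modules. Your derivation of $D_i(e^{s_i \sbt \mu}) = -D_i(e^\mu)$ is right (indeed $D_i(e^\mu) = (e^\mu - e^{s_i \sbt \mu})/(1-e^{-\alpha_i})$ makes the antisymmetry visible), and factoring $w_0 = (w_0 s_i)s_i$ reducedly to propagate the sign is the correct mechanism for handling nondominant, regular $\lambda$; the singular case $s_i \sbt \lambda = \lambda$ then forces vanishing as you say. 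You also correctly isolate the genuine content: for dominant $\lambda$ the proposition \emph{is} the identity $D_{w_0}(e^\lambda) = \op{ch}(V(\lambda))$, which you rightly treat as an external input from \cite{Ku2}*{Chapter 8} rather than something to reprove. In short, your proposal is sound, and it simply unpacks what the paper chose to cite.
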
 

Of particular importance, when applied to the line bundle $\mc[L](\lambda)$ for dominant integral $\lambda \in \dom$, Proposition \ref{Euler-char} together with the Borel--Weil--Bott theorem as in Theorem \ref{BWB} gives that 
$$
\op{ch}(H^0(G/B, \mc[L](\lambda))) = \diamond(D_{w_0}(e^\lambda)) = \op{ch}(V(\lambda)^\ast).
$$

A utility of Demazure operators are their applicability and generalizations to many character computation problems. We will see in Section \ref{embedding}, for example, another application of Demazure operators for computing characters of tensor products of representations that will be of particular use.

\section{Hochschild cohomology of $G/B$ and the sheaf of polyvector fields} \label{Hoch}

In this section, we give a brief introduction to the primary geometric object of interest for this note: the Hochschild cohomology $HH^\bullet(X)$ of a smooth projective variety $X$. We make no effort for a complete account of this topic, but instead give a self-contained overview of the key properties and results which we will apply as a black box in the setting of flag varieties. We refer interested readers to \cite{BS} and references therein, on which this section is closely modeled. 

Hochschild cohomology was initially introduced as a tool for understanding the deformation theory of associative algebras; this machinery has since been adapted to various suitable categorical settings. For the purposes of this note, we work with the following definition of Hochschild cohomology in algebraic geometry.  

\begin{definition} Let $X$ be a smooth, projective variety, and denote by $\Delta: X \hookrightarrow X \times X$ the diagonal embedding. Then the $i^{th}$ Hochschild cohomology of $X$ is defined via 
$$
HH^i(X):= \op{Ext}^i_{X \times X}( \Delta_\ast \mathcal{O}_X, \Delta_\ast \mathcal{O}_X),
$$
where $\mathcal{O}_X$ is the structure sheaf on $X$. We also denote by 
$$
HH^\bullet(X) := \bigoplus_{i \geq 0} HH^i(X)
$$
the total Hochschild cohomology. 
\end{definition}

An additional key feature of Hoschschild cohomology is of an algebraic nature; in particular, it has a structure of a Gerstenhaber algebra. Gerstenhaber algebras have a graded-commutative product as well as a Lie bracket of degree $-1$, with some compatibilities. We focus here only on the Lie bracket, which gives a mapping
$$
HH^i(X) \times HH^j(X) \to HH^{i+j-1}(X);
$$
further, restricting this product to $HH^1(X) \times HH^1(X) \to HH^1(X)$ gives a Lie algebra structure on $HH^1(X)$. Then as the product in general includes the shift in grading, each $HH^i(X)$ is a Lie algebra module over $HH^1(X)$. 

While this definition of Hochchild cohomology is quite general, we will work instead with another description of $HH^\bullet(X)$ that will be more approachable when considering the flag variety. This comes via the following proposition, called the Hochschild--Kostant--Rosenberg (or HKR) isomorphism, who proved this result in the affine case \cite{HKR}. For projective varieties and related generalizations, this was adapted by many authors (see \cite{BS}*{Section 2.4} for select references). 

\begin{proposition} \label{HKRvec}
Let $X$ be a smooth projective variety with tangent bundle $\mathcal{T}_X$. Then there is a vector space isomorphism
$$
HH^i(X) \cong \bigoplus_{p+q=i} H^q(X, \bigwedge\nolimits^{\! p} \mathcal{T}_X).
$$
In particular, $HH^\bullet(X) \cong H^\bullet(X, \bigwedge^\bullet \mathcal{T}_X)$.
\end{proposition}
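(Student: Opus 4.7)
The plan is to reduce the two-variable Ext on $X \times X$ to cohomology on $X$ itself, and then to identify the derived pullback of the diagonal pushforward with the shifted exterior algebra of the cotangent bundle. First I would invoke the adjunction $\Delta_* \dashv \Delta^!$ (or equivalently, use that $\Delta_*$ is fully faithful with a known left adjoint in this smooth setting) to rewrite
$$
HH^i(X) = \op{Ext}^i_{X \times X}(\Delta_* \mathcal{O}_X, \Delta_* \mathcal{O}_X) \cong \op{Ext}^i_X(L\Delta^* \Delta_* \mathcal{O}_X, \mathcal{O}_X).
$$
This moves the entire computation onto $X$, at the cost of having to understand the (derived) object $L\Delta^* \Delta_* \mathcal{O}_X$, which records how far the diagonal is from being a flat embedding.

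The heart of the proof — and the step I expect to be the main obstacle — is establishing the classical HKR quasi-isomorphism
$$
L\Delta^* \Delta_* \mathcal{O}_X \;\simeq\; \bigoplus_{p \geq 0} \bigwedge\nolimits^{\! p} \Omega^1_X [p]
$$
in the derived category of $X$. Locally (the original affine HKR of \cite{HKR}), one writes down an explicit antisymmetrization map from the Koszul-type resolution of $\mathcal{O}_X$ as an $\mathcal{O}_X \otimes \mathcal{O}_X$-module, and checks it is a quasi-isomorphism in characteristic zero; the combinatorial heart is that division by $p!$ is needed, which is why the statement fails in positive characteristic. To globalize I would either patch local models along a trivializing cover of the diagonal (using that the formal neighborhood of $\Delta$ is controlled by $\Omega^1_X$) or appeal to Kontsevich/Swan-type arguments using the splitting of the first-order thickening. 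Since the excerpt only requires the vector-space version, I can afford to ignore the subtle issue of whether this splitting is canonical as an algebra map, and only record it as an isomorphism of complexes.

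With the quasi-isomorphism in hand, the remaining steps are formal. Substituting into the Ext computation and using the natural duality $\op{Hom}_{\mathcal{O}_X}(\bigwedge^p \Omega^1_X, \mathcal{O}_X) \cong \bigwedge^p \mathcal{T}_X$ (available because $X$ is smooth, so $\Omega^1_X$ is locally free of finite rank), I obtain
$$
\op{Ext}^i_X\!\Bigl(\bigoplus_{p} \bigwedge\nolimits^{\! p} \Omega^1_X[p], \, \mathcal{O}_X\Bigr) \;\cong\; \bigoplus_{p+q=i} \op{Ext}^q_X\!\Bigl(\bigwedge\nolimits^{\! p} \Omega^1_X, \mathcal{O}_X\Bigr) \;\cong\; \bigoplus_{p+q=i} H^q\!\Bigl(X, \bigwedge\nolimits^{\! p} \mathcal{T}_X\Bigr),
$$
where in the last step I use that $\bigwedge^p \mathcal{T}_X$ is locally free, so the sheaf Ext vanishes and the global Ext reduces to sheaf cohomology via the local-to-global spectral sequence (which degenerates for this reason). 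Summing over $i$ gives $HH^\bullet(X) \cong H^\bullet(X, \bigwedge^\bullet \mathcal{T}_X)$, as claimed. The honest work is all packaged inside the HKR quasi-isomorphism; everything surrounding it is formal manipulation of adjunctions and cohomology, which is why the paper is content to cite this as a black box.
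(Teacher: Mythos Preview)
Your sketch is correct and follows the standard route to the HKR isomorphism for smooth projective varieties (adjunction to reduce to $L\Delta^*\Delta_*\mathcal{O}_X$, then the derived quasi-isomorphism $L\Delta^*\Delta_*\mathcal{O}_X \simeq \bigoplus_p \Omega_X^p[p]$, then dualize using local freeness). One small wording issue: the adjunction you actually use is $L\Delta^* \dashv R\Delta_*$, not $\Delta_* \dashv \Delta^!$; the line you wrote is exactly the $(L\Delta^*, R\Delta_*)$ adjunction applied with $R\Delta_*\mathcal{O}_X = \Delta_*\mathcal{O}_X$.

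That said, the paper does \emph{not} prove this proposition at all. It is stated as a black-box input, with the affine case attributed to \cite{HKR} and the projective case deferred to the literature via \cite{BS}. So there is no ``paper's own proof'' to compare against: you have supplied an argument where the paper simply cites one. Your outline matches the proofs one finds in the references the paper points to, and your closing remark---that the honest work is the HKR quasi-isomorphism itself and the rest is formal---is precisely why the paper is comfortable invoking it without proof.
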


We refer to $\bigwedge^\bullet \mathcal{T}_X$ as the sheaf of polyvector fields on $X$. Recall that $H^\bullet(X, \bigwedge^\bullet \mathcal{T}_X)$ also has a natural Gerstenhaber algebra structure coming from the Schouten--Nijenhuis bracket. However, the HKR isomorphism is \emph{not} in general an algebra isomorphism. Nevertheless, by appropriately ``twisting" the standard HKR isomorphism (due to an insight of Kontsevich) one arrives at an isomorphism of Gerstenhaber algebras; we make use of the existence of such a Gerstenhaber algebra isomorphism, and refer to \cite{CRVdB1}, \cite{CRVdB2} for a more comprehensive treatment.

We specialize now to the case $X=G/B$ for the remainder of this paper, and we focus our attention on the realization of $HH^\bullet(G/B)$ as the cohomology of the sheaf of polyvector fields. We have, using the general machinery of Section \ref{bundles}, the identifications 
$$
\mathcal{T}_{G/B} \cong G \times_B (\mf[g]/\mf[b]) \cong G \times_B \mf[u] =: \mathcal{L}(\mf[u]),
$$
so that $\bigwedge^p \mathcal{T}_{G/B} \cong \mathcal{L}(\bigwedge^p \mf[u])$ for all $p \geq 0$, and in total $\bigwedge^\bullet \mathcal{T}_{G/B} \cong \mathcal{L}(\bigwedge^\bullet \mf[u])$. Further, using the HKR isomorphism we see that 
$$
HH^1(G/B) \cong H^0(G/B, \mathcal{T}_{G/B}).
$$
We record now the following classical result of Bott \cite{Bo} which allows us to identify this cohomology. While Bott's result is on the module structure, the following isomorphism also holds as Lie algebras by checking the appropriate bracket coming from the Gerstenhaber algebra structure.

\begin{proposition} \label{BottProp} For all $i \geq 1$, 
$
H^i(G/B, \mathcal{T}_{G/B}) = 0,
$
and $H^0(G/B, \mathcal{T}_{G/B}) \cong \mf[g].
$
\end{proposition}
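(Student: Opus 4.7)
The plan is to identify the cohomology of $\mathcal{T}_{G/B} \cong \mathcal{L}(\mathfrak{u})$ via the tools of Section \ref{bundles}, reducing it to a computation that can be handled by a short exact sequence argument supplemented by an Euler--Poincar\'e calculation. First I would exploit the short exact sequence of $B$-modules
$$
0 \to \mathfrak{b} \to \mathfrak{g} \to \mathfrak{g}/\mathfrak{b} \to 0,
$$
which induces a short exact sequence of vector bundles $0 \to \mathcal{L}(\mathfrak{b}) \to \mathcal{L}(\mathfrak{g}) \to \mathcal{T}_{G/B} \to 0$. The key observation is that $\mathcal{L}(\mathfrak{g})$ is the trivial rank-$\dim\mathfrak{g}$ bundle $G/B \times \mathfrak{g}$, via the $G$-equivariant isomorphism $[g, v] \mapsto (gB, g \cdot v)$; consequently $H^0(\mathcal{L}(\mathfrak{g})) \cong \mathfrak{g}$ as the adjoint representation, and $H^i(\mathcal{L}(\mathfrak{g})) = 0$ for $i \geq 1$. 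The associated long exact sequence in cohomology then reduces the proposition to establishing $H^\bullet(G/B, \mathcal{L}(\mathfrak{b})) = 0$, which immediately gives $H^0(\mathcal{T}_{G/B}) \cong \mathfrak{g}$ and $H^i(\mathcal{T}_{G/B}) = 0$ for $i \geq 1$.

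To prove the vanishing $H^\bullet(\mathcal{L}(\mathfrak{b})) = 0$, I would combine Lemma \ref{cohom-support} with the Euler--Poincar\'e computation of Proposition \ref{Euler-char}. The weights of $\mathfrak{b}$ are $\{0\}$ with multiplicity $\ell$ together with $\Phi^+$, so Lemma \ref{cohom-support} constrains any irreducible $V(\mu) \subset H^\bullet(\mathcal{L}(\mathfrak{b}))$ to have $\mu^\ast \in W \sbt (\{0\} \cup (-\Phi^+))$. For each positive root $\beta$, the weight $\rho - \beta$ is either singular (i.e., on a reflecting hyperplane, so the corresponding line bundle $\mathcal{L}(-\beta)$ contributes nothing by Borel--Weil--Bott) or reflects to a strictly dominant weight whose contribution to the graded pieces of the filtration can only appear in a specific cohomological degree. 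Computing
$$
\chi_T(\mathcal{L}(\mathfrak{b})) = \diamond D_{w_0}\!\left(\ell + \sum_{\beta \in \Phi^+} e^{-\beta}\right)
$$
via the identity $D_{w_0}(e^\lambda) = (-1)^{\ell(w)}\operatorname{ch}(V(w \sbt \lambda))$ when $\lambda + \rho$ is regular (and $0$ otherwise) should confirm that the numerical Euler characteristic vanishes, and together with the candidate restriction from Lemma \ref{cohom-support} forces $H^\bullet(\mathcal{L}(\mathfrak{b})) = 0$.

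The main obstacle will be verifying the combinatorial cancellation in the Demazure-operator calculation and ruling out the possibility that candidates permitted by Lemma \ref{cohom-support} appear in paired positive degrees (contributing trivially to $\chi_T$ but nontrivially to individual $H^i$). This is precisely the point at which Bott's original spectral sequence analysis of the height filtration on $\mathfrak{u}$ enters \cite{Bo}; for the purposes of this note we are content either to carry out this cancellation in detail or to invoke Bott's classical result directly.
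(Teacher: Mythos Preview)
The paper does not supply a proof of Proposition \ref{BottProp}; it records the statement as a classical result of Bott and cites \cite{Bo} directly. Your proposal is therefore strictly more detailed than the paper's own treatment, and since you ultimately also defer to Bott's argument for the final step, the two are consistent in spirit.

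That said, there is an internal inconsistency worth flagging. You first assert that the Euler--Poincar\'e vanishing ``together with the candidate restriction from Lemma \ref{cohom-support} forces $H^\bullet(\mathcal{L}(\mathfrak{b})) = 0$,'' and then in the next paragraph correctly retract this by naming the obstacle: those two ingredients alone cannot exclude matched contributions in distinct cohomological degrees that cancel in $\chi_T$ but not in $H^\bullet$. Concretely, the associated graded of $\mathfrak{b}$ contains $\ell$ copies of the zero weight (each contributing a trivial representation in degree $0$) and the simple-root weights $\alpha_i$ (for which $s_i \sbt(-\alpha_i)=0$, contributing trivials in degree $1$); these already cancel in $\chi_T$ without forcing any vanishing. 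Establishing that the actual cohomology vanishes is equivalent, via the short exact sequence $0 \to \Omega^1_{G/B} \to \mathcal{L}(\mathfrak{b}) \to \mathcal{O}_{G/B}^{\oplus \ell} \to 0$ induced by $0 \to \mathfrak{n} \to \mathfrak{b} \to \mathfrak{h} \to 0$, to showing that the connecting homomorphism $H^0(\mathcal{O}^{\oplus \ell}) \to H^1(\Omega^1_{G/B})$ is an isomorphism. That is genuine content not supplied by $\chi_T$ or Lemma \ref{cohom-support}, and it is precisely where one must invoke Bott (or an equivalent argument identifying this map with the Chern-class map $\mathfrak{h}^\ast \to H^2(G/B,\mathbb{C})$). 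So your reduction is sound, but the first of your two claimed routes to closing the gap (``carry out this cancellation in detail'') is not available with only the tools you list; only the second (invoke \cite{Bo}) is, which is what the paper does.
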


By the general vector bundle framework, $H^\bullet(G/B, \bigwedge^\bullet \mathcal{T}_{G/B})$ is a $G$-representation, and equivalently a $\mf[g]$-representation since $G$ is simply connected. Proposition \ref{BottProp} leads to another construction of the $\mf[g]$-action on $H^\bullet(G/B, \bigwedge^\bullet \mathcal{T}_{G/B})$, via the Gerstenhaber bracket; it is important to note that these two actions are in fact the same. Transporting the latter perspective via the enhanced HKR (algebra) isomorphism, we see that $HH^1(G/B) \cong \mf[g]$ acts on $HH^\bullet(G/B)$. 

In total, to understand $HH^\bullet(G/B)$ as a $\mf[g]$-representation, we will utilize the more familiar setting of vector bundles on flag varieties by considering the structure of the representation $H^\bullet(G/B, \bigwedge^\bullet \mathcal{T}_{G/B})$. We take up this approach in the following section.

\section{An embedding of $V(\rho) \otimes V(\rho)$ and components of $HH^\bullet(G/B)$} \label{embedding}

For two dominant integral weights $\lambda, \mu \in \dom$, the tensor decomposition problem seeks to understand the irreducible summands in $V(\lambda) \otimes V(\mu)$:
$$
V(\lambda) \otimes V(\mu) \cong \bigoplus_{\nu \in \dom} V(\nu)^{\oplus m_{\lambda, \mu}^\nu},
$$
where we denote by $m_{\lambda, \mu}^\nu \in \Z_{\geq 0}$ the multiplicity with which $V(\nu)$ appears in the tensor product. Equivalently, we can consider at the level of characters 
$$
\op{ch}(V(\lambda) \otimes V(\mu)) = \op{ch}(V(\lambda)) \op{ch}(V(\mu)) = \sum_{\nu \in \dom} m_{\lambda, \mu}^\nu \op{ch}(V(\nu)).
$$
While reducing the tensor decomposition problem to understanding products of characters is equivalent, it is in general no less complicated. Surprisingly, the language of characters gives a succinct method to connect the tensor decomposition problem for $V(\rho) \otimes V(\rho)$ as in Conjecture \ref{Kconj} to the cohomology $H^\bullet(G/B, \bigwedge^\bullet \mathcal{T}_{G/B})$. To do this, we need to better understand the characters of $V(\rho)$ and its tensor square. 

\subsection{A compact character formula for $V(\rho) \otimes V(\rho)$} To this end, we recall the following result of Brauer, recorded in \cite{Kos2}*{Section 4.8} (see also \cite{Ku1}*{Theorem 2.14}).

\begin{proposition} \label{tensorcharacter}
For $\lambda, \mu \in \dom$, we have 
$$
\op{ch}(V(\lambda) \otimes V(\mu)) = D_{w_0}(e^\lambda D_{w_0}(e^\mu)) = D_{w_0}(e^\lambda \op{ch}(V(\mu))).
$$
\end{proposition}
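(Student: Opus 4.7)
The plan is to derive both equalities from two ingredients already assembled in the excerpt: the Weyl character formula expressed via Demazure operators, and a Leibniz-type identity for $D_i$ against factors invariant under $s_i$. Neither step is deep; the content is really a clean bookkeeping exercise with $D_{w_0}$.

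First, I would establish the Weyl character formula in the form $\op{ch}(V(\mu)) = D_{w_0}(e^\mu)$ for $\mu \in \dom$. Applying Proposition \ref{Euler-char} to the line bundle $\mc[L](\mu)$, i.e. to the one-dimensional $B$-module $\C_{-\mu}$, gives
$$
\chi_T(\mc[L](\mu)) = \diamond\bigl(D_{w_0}(\diamond(e^{-\mu}))\bigr) = \diamond(D_{w_0}(e^\mu)).
$$
On the other hand, the Borel--Weil--Bott theorem (Theorem \ref{BWB}) says $H^i(G/B, \mc[L](\mu))$ vanishes for $i>0$ and equals $V(\mu)^\ast$ for $i=0$, so $\chi_T(\mc[L](\mu)) = \op{ch}(V(\mu)^\ast) = \diamond(\op{ch}(V(\mu)))$. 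Applying $\diamond$ to both sides (it is an involution) yields $D_{w_0}(e^\mu) = \op{ch}(V(\mu))$. This immediately gives the second equality in the statement: $D_{w_0}(e^\lambda D_{w_0}(e^\mu)) = D_{w_0}(e^\lambda \op{ch}(V(\mu)))$.

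Second, I would prove a ``module'' property of the Demazure operators: if $g \in \Z[\mc[P]]$ is $s_i$-invariant, then for any $f$,
$$
D_i(fg) = g\, D_i(f).
$$
This is a direct computation from the defining formula, using $s_i(fg) = s_i(f)\, s_i(g) = s_i(f)\, g$:
$$
D_i(fg) = \frac{fg - e^{-\alpha_i} s_i(f) g}{1 - e^{-\alpha_i}} = g \cdot \frac{f - e^{-\alpha_i} s_i(f)}{1-e^{-\alpha_i}} = g\, D_i(f).
$$
Iterating along a reduced expression for $w_0$, this upgrades to $D_{w_0}(fg) = g\, D_{w_0}(f)$ whenever $g$ is $W$-invariant.

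Third, combining these, take $f = e^\lambda$ and $g = \op{ch}(V(\mu))$. Characters of finite-dimensional $\mf[g]$-modules are $W$-invariant, so the second step applies:
$$
D_{w_0}(e^\lambda \op{ch}(V(\mu))) = \op{ch}(V(\mu))\, D_{w_0}(e^\lambda) = \op{ch}(V(\mu))\, \op{ch}(V(\lambda)) = \op{ch}(V(\lambda) \otimes V(\mu)),
$$
which is the first equality. The only subtle point to watch is to justify applying $D_w$ as the composition $D_{i_1}\cdots D_{i_k}$ along a reduced expression for $w_0$; this is exactly the braid-relation/well-definedness statement recalled after the definition of $D_i$, so there is no real obstacle. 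The whole argument is essentially the observation that $D_{w_0}$ is $\Z[\mc[P]]^W$-linear combined with the character-theoretic Weyl character formula.
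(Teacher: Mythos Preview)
Your proof is correct. The paper does not actually prove this proposition: it simply records it as a classical result of Brauer, with references to \cite{Kos2}*{Section 4.8} and \cite{Ku1}*{Theorem 2.14}. What you have done is supply a self-contained derivation using only the machinery already assembled in Section~\ref{bundles}: the identity $D_{w_0}(e^\mu) = \op{ch}(V(\mu))$ (which the paper itself notes immediately after Proposition~\ref{Euler-char}), together with the $\Z[\mc[P]]^W$-linearity of $D_{w_0}$ that you verify directly from the definition. This is the standard modern argument, and it has the virtue of making the proposition an internal consequence of the paper's own setup rather than an external citation. There is nothing to compare against on the paper's side beyond the reference.
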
 

Aiming for the present case of interest $V(\rho) \otimes V(\rho)$, we make use of the following lemma on $\op{ch}(V(\rho))$, which is an easy exercise following from the Weyl character formula (cf. \cite{Kac}*{Exercise 10.1}).

\begin{lemma} \label{rhochar}
$\op{ch}(V(\rho)) = e^{\rho} \prod_{\beta \in \Phi^+} (1+e^{-\beta}).$
\end{lemma}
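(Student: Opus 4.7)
The plan is to derive the formula directly from the Weyl character formula and the Weyl denominator identity, with a small trick of rescaling. Concretely, the Weyl character formula asserts
$$
\op{ch}(V(\rho)) = \frac{\sum_{w \in W} (-1)^{l(w)} e^{w(2\rho)}}{\prod_{\beta \in \Phi^+} \left(e^{\beta/2} - e^{-\beta/2}\right)},
$$
so the task reduces to evaluating the numerator $\sum_{w \in W} (-1)^{l(w)} e^{w(2\rho)}$ in product form and then simplifying the quotient.

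For the numerator, I would invoke the Weyl denominator formula
$$
\sum_{w \in W} (-1)^{l(w)} e^{w\rho} = \prod_{\beta \in \Phi^+}\left(e^{\beta/2} - e^{-\beta/2}\right),
$$
and apply it with the roles of the roots rescaled by $2$. Since $2\rho = \sum_{\beta \in \Phi^+} \beta$ is precisely the sum of the positive roots (i.e., the Weyl vector for the rescaled root system), formally substituting $\beta \mapsto 2\beta$ in the identity yields
$$
\sum_{w \in W}(-1)^{l(w)} e^{w(2\rho)} = \prod_{\beta \in \Phi^+}\left(e^{\beta} - e^{-\beta}\right).
$$
(Alternatively, this is the same $W$-antisymmetric computation applied to the doubled root system, or one can view it as the denominator identity for the rescaled strictly dominant weight $2\rho$.)

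Substituting this into the Weyl character formula gives
$$
\op{ch}(V(\rho)) = \prod_{\beta \in \Phi^+} \frac{e^{\beta} - e^{-\beta}}{e^{\beta/2} - e^{-\beta/2}} = \prod_{\beta \in \Phi^+}\left(e^{\beta/2} + e^{-\beta/2}\right),
$$
using the factorization $e^{\beta} - e^{-\beta} = (e^{\beta/2}-e^{-\beta/2})(e^{\beta/2}+e^{-\beta/2})$. Pulling out $e^{\beta/2}$ from each factor and using $\rho = \frac{1}{2}\sum_{\beta \in \Phi^+}\beta$ produces the claimed identity
$$
\op{ch}(V(\rho)) = e^{\rho}\prod_{\beta \in \Phi^+}\left(1 + e^{-\beta}\right).
$$

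There is no real obstacle here; the only subtle step is the rescaling argument that rewrites the Weyl-antisymmetric numerator at $2\rho$ as $\prod_{\beta \in \Phi^+}(e^{\beta} - e^{-\beta})$, and even this can be bypassed by observing directly that $\op{ch}(V(\rho)) \cdot \prod_{\beta \in \Phi^+}(e^{\beta/2}-e^{-\beta/2})$ is $W$-antisymmetric with top term $e^{2\rho}$ and comparing leading terms with $e^\rho \prod_{\beta \in \Phi^+}(1+e^{-\beta}) \cdot \prod_{\beta \in \Phi^+}(e^{\beta/2}-e^{-\beta/2}) = \prod_{\beta \in \Phi^+}(e^{\beta} - e^{-\beta})$.
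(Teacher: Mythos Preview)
Your argument is correct and follows precisely the route the paper indicates: the lemma is stated there without proof, only with the remark that it is an easy exercise following from the Weyl character formula (with a reference to Kac's book). Your derivation via the Weyl denominator identity and the doubling substitution $e^\mu \mapsto e^{2\mu}$ is exactly the standard way to carry out that exercise, so there is nothing to add or compare.
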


Together, these give the following corollary, a compact formula for $\op{ch}(V(\rho) \otimes V(\rho))$. 

\begin{corollary} \label{rhosquarechar}
$\op{ch}(V(\rho) \otimes V(\rho)) = D_{w_0} \left( \prod_{\beta \in \Phi^+} (1+e^\beta) \right).$
\end{corollary}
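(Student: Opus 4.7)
The plan is to chain together Proposition \ref{tensorcharacter} and Lemma \ref{rhochar} with a simple algebraic rearrangement inside the argument of $D_{w_0}$. There is no real obstacle here; the whole content is recognizing that $2\rho = \sum_{\beta \in \Phi^+} \beta$ lets us redistribute an $e^{2\rho}$ across the product $\prod_{\beta}(1+e^{-\beta})$ to flip each exponent sign.

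Concretely, I would first specialize Proposition \ref{tensorcharacter} to $\lambda = \mu = \rho$, obtaining
$$
\op{ch}(V(\rho) \otimes V(\rho)) = D_{w_0}\bigl(e^\rho \op{ch}(V(\rho))\bigr).
$$
Then I would substitute the product formula of Lemma \ref{rhochar} for $\op{ch}(V(\rho))$, which collapses the argument to
$$
e^\rho \cdot e^\rho \prod_{\beta \in \Phi^+}(1+e^{-\beta}) \;=\; e^{2\rho} \prod_{\beta \in \Phi^+}(1+e^{-\beta}).
$$

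The final step is to absorb $e^{2\rho}$ into the product. Since $\rho = \tfrac{1}{2}\sum_{\beta \in \Phi^+}\beta$, we have $e^{2\rho} = \prod_{\beta \in \Phi^+} e^\beta$, and distributing one factor of $e^\beta$ into each corresponding factor $(1+e^{-\beta})$ yields $e^\beta + 1 = 1 + e^\beta$. Thus the argument of $D_{w_0}$ becomes $\prod_{\beta \in \Phi^+}(1+e^\beta)$, which is exactly the claimed identity. The entire proof is two display-math lines; no spectral sequence, no induction, and no separate case analysis is needed.
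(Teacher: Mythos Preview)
Your proof is correct and matches the paper's argument essentially line for line: specialize Proposition~\ref{tensorcharacter} to $\lambda=\mu=\rho$, insert Lemma~\ref{rhochar}, and absorb $e^{2\rho}=\prod_{\beta\in\Phi^+}e^\beta$ into the product using $2\rho=\sum_{\beta\in\Phi^+}\beta$. There is nothing to add.
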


\begin{proof}
Specializing Proposition \ref{tensorcharacter} to $\lambda=\mu=\rho$, we get 
$$
\op{ch}(V(\rho) \otimes V(\rho)) = D_{w_0}(e^\rho \op{ch}(V(\rho))).
$$
But by Lemma \ref{rhochar} we have that $e^\rho \op{ch}(V(\rho)) = e^{2\rho} \prod_{\beta \in \Phi^+} (1+e^{-\beta}) = \prod_{\beta \in \Phi^+} (1+e^\beta)$, using that $2\rho = \sum_{\beta \in \Phi^+} \beta$.
\end{proof}

Finally, note that (since in our conventions $\mf[n]$ corresponds to the positive roots) the $B$-module $\bigwedge^\bullet \mf[n]$ satisfies
$$
\op{ch}\left(\bigwedge \nolimits^{\! \bullet} \mf[n]\right) = \prod_{\beta \in \Phi^+} (1 + e^\beta).
$$
Combined with the previous result, we get the following corollary; this is the key observation linking $V(\rho) \otimes V(\rho)$ and $HH^\bullet(G/B)$. 

\begin{corollary} \label{keyremark}
 $\op{ch}(V(\rho) \otimes V(\rho)) = D_{w_0}(\op{ch}(\bigwedge^\bullet \mf[n]))$.
\end{corollary}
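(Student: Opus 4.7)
The plan is to observe that this corollary is essentially an immediate substitution combining Corollary \ref{rhosquarechar} with an elementary character computation for the exterior algebra $\bigwedge^\bullet \mf[n]$. There is no real obstacle; the only nontrivial content was already absorbed into establishing Corollary \ref{rhosquarechar}.

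First I would justify the character identity for the exterior algebra. Since $\mf[b]$ corresponds to the positive roots in our conventions, we have the root space decomposition $\mf[n] = \bigoplus_{\beta \in \Phi^+} \mf[g]_\beta$, where each $\mf[g]_\beta$ is one-dimensional of $T$-weight $\beta$. Consequently $\bigwedge^\bullet \mf[n]$ has a weight basis indexed by subsets $S \subseteq \Phi^+$, with the basis vector labeled by $S$ carrying weight $\sum_{\beta \in S} \beta$. Summing over all subsets factors as a product over $\Phi^+$, giving
$$
\op{ch}\!\left(\bigwedge\nolimits^{\!\bullet} \mf[n]\right) \;=\; \sum_{S \subseteq \Phi^+} e^{\sum_{\beta \in S} \beta} \;=\; \prod_{\beta \in \Phi^+} (1+e^\beta),
$$
which is precisely the statement recorded just before the corollary.

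Next I would apply $D_{w_0}$ to both sides of the identity above and invoke Corollary \ref{rhosquarechar}, which states that $\op{ch}(V(\rho) \otimes V(\rho)) = D_{w_0}(\prod_{\beta \in \Phi^+}(1+e^\beta))$. Chaining these two equalities yields
$$
\op{ch}(V(\rho) \otimes V(\rho)) \;=\; D_{w_0}\!\left(\prod_{\beta \in \Phi^+}(1+e^\beta)\right) \;=\; D_{w_0}\!\left(\op{ch}\!\left(\bigwedge\nolimits^{\!\bullet} \mf[n]\right)\right),
$$
which is the asserted formula. Since both ingredients are already established, no spectral sequence, induction, or combinatorial argument is needed — the proof amounts to these two lines. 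The only conceptual point worth emphasizing in the writeup is that this identification is what makes the connection to Hochschild cohomology meaningful: the right-hand side is, by Proposition \ref{Euler-char} applied with $M = \bigwedge^\bullet \mf[n]$, exactly (the $\diamond$-twist of) the Euler--Poincar\'e characteristic $\chi_T(\mc[L](\bigwedge^\bullet \mf[n]))$ of the sheaf of polyvector fields on $G/B$, setting up the comparison between the tensor square of $V(\rho)$ and $HH^\bullet(G/B)$ to be exploited in the subsequent proofs of Proposition \ref{embeddingprop} and Theorem \ref{maintheorem}.
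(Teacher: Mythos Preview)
Your proof is correct and is exactly the paper's approach: the corollary is stated without proof, as it follows immediately from Corollary~\ref{rhosquarechar} together with the displayed identity $\op{ch}(\bigwedge^{\!\bullet}\mf[n]) = \prod_{\beta\in\Phi^+}(1+e^\beta)$ recorded just before it. One small slip in your closing parenthetical: the sheaf of polyvector fields is $\mc[L](\bigwedge^{\!\bullet}\mf[u])$, not $\mc[L](\bigwedge^{\!\bullet}\mf[n])$, so the Euler--Poincar\'e connection runs through $\diamond(\op{ch}(\bigwedge^{\!\bullet}\mf[u])) = \op{ch}(\bigwedge^{\!\bullet}\mf[n])$ and the self-duality of $\rho$, as carried out in the proof of Proposition~\ref{embedprop2}.
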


\subsection{Components of $HH^\bullet(G/B)$} 
We are now prepared to prove the main results of this note. As a preliminary, we give the following lemma which constrains the highest weights in irreducible subrepresentations of $HH^\bullet(G/B)$. 

\begin{lemma} \label{supset}
Let $\lambda \in \dom$ be a dominant integral weight such that $V(\lambda) \subset H^\bullet(G/B, \bigwedge^\bullet \mathcal{T}_{G/B})$. Then $\lambda \leq 2\rho$ in the dominance order. 
\end{lemma}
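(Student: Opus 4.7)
The plan is to combine Lemma \ref{cohom-support} with a direct computation inside the root lattice. Since $\bigwedge^\bullet \mathcal{T}_{G/B} \cong \mathcal{L}(\bigwedge^\bullet \mathfrak{u})$ and the $B$-module $M = \bigwedge^\bullet \mathfrak{u}$ has $T$-weights $\{-\sum_{\beta \in S}\beta : S \subseteq \Phi^+\}$, Lemma \ref{cohom-support} gives that any irreducible $V(\lambda) \subset H^\bullet(G/B, \bigwedge^\bullet \mathcal{T}_{G/B})$ satisfies $\lambda^\ast = w \sbt \nu$ for some $w \in W$ and some $\nu = \sum_{\beta \in S}\beta$ with $S \subseteq \Phi^+$. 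Note that $\lambda \leq 2\rho$ is equivalent to $\lambda^\ast \leq 2\rho$, because the involution $-w_0$ permutes the simple roots (hence preserves $\mathcal{Q}^+$) and fixes $2\rho = \sum_{\beta \in \Phi^+}\beta$; so it suffices to prove $\lambda^\ast \leq 2\rho$.

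To this end, I would expand $\lambda^\ast = w(\nu + \rho) - \rho = w(\nu) + (w(\rho) - \rho)$ using the standard identity $\rho - w(\rho) = \sum_{\gamma \in I(w^{-1})}\gamma$, where $I(w^{-1}) := \{\gamma \in \Phi^+ : w^{-1}(\gamma) \in \Phi^-\}$ is the inversion set of $w^{-1}$. Next, partition $S = S_+ \sqcup S_-$ according to the sign of $w(\beta)$ for $\beta \in S$, and set $A := w(S_+) \subseteq \Phi^+$ and $B := -w(S_-) \subseteq \Phi^+$. Then $w(\nu) = \sum_{\gamma \in A}\gamma - \sum_{\gamma \in B}\gamma$, which yields the compact expression
$$
\lambda^\ast \;=\; \sum_{\gamma \in A}\gamma \;-\; \sum_{\gamma \in B}\gamma \;-\; \sum_{\gamma \in I(w^{-1})}\gamma.
$$

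Computing the difference to $2\rho = \sum_{\gamma \in \Phi^+} \gamma$ then gives
$$
2\rho - \lambda^\ast \;=\; \sum_{\gamma \in \Phi^+ \setminus A}\gamma \;+\; \sum_{\gamma \in B}\gamma \;+\; \sum_{\gamma \in I(w^{-1})}\gamma,
$$
which manifestly lies in $\mathcal{Q}^+$: it is a sum of positive roots, with each $\gamma \in \Phi^+$ appearing with multiplicity between $0$ and $3$ depending on which of the three subsets $\Phi^+ \setminus A$, $B$, $I(w^{-1})$ it belongs to. Hence $\lambda^\ast \leq 2\rho$ in the dominance order, and the result follows.

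The only real content is the bookkeeping to package the three contributions $\Phi^+ \setminus A$, $B$, and $I(w^{-1})$ as subsets of $\Phi^+$, after which nonnegativity of the coefficients is automatic; I do not anticipate a substantial obstacle. As a sanity check, in the case $w = e$ we have $A = S$ and $B = I(w^{-1}) = \emptyset$, so the identity reduces to $2\rho - \nu = \sum_{\gamma \in \Phi^+ \setminus S}\gamma \in \mathcal{Q}^+$, consistent with $\lambda^\ast = \nu = \sum_{\beta \in S}\beta$ being directly bounded by $2\rho$.
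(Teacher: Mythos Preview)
Your proof is correct and complete. It differs from the paper's argument in a genuine way, though both start from Lemma~\ref{cohom-support} and reduce to showing $\lambda^\ast \leq 2\rho$.

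The paper exploits the identity $\op{ch}(\bigwedge^\bullet \mf[n]) = e^\rho \op{ch}(V(\rho))$ (from Lemma~\ref{rhochar}) to write every $\nu = \sum_{\beta \in S}\beta$ as $\rho + \gamma$ with $\gamma \in \op{wt}(V(\rho))$. Then $2\rho - \lambda^\ast = (2\rho - w(2\rho)) + (\rho - w(\gamma))$, and each piece lies in $\mc[Q]^+$ by the standard facts that $w(2\rho) \leq 2\rho$ and that every weight of $V(\rho)$ is $\leq \rho$. This is short and ties the lemma directly to the $V(\rho)$ theme of the paper.

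Your approach is purely root-combinatorial: you never invoke $V(\rho)$ or its character, instead tracking directly how $w$ moves the roots in $S$ and using the inversion-set identity $\rho - w(\rho) = \sum_{\gamma \in I(w^{-1})}\gamma$. The payoff is a more self-contained argument that does not depend on Lemma~\ref{rhochar}; the cost is the explicit bookkeeping with the partition $S = S_+ \sqcup S_-$. Both arguments are of comparable length, and yours would port unchanged to any setting where the relevant $B$-module has weights supported on sums of distinct positive roots.
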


\begin{proof}
As $H^\bullet (G/B, \bigwedge^\bullet \mathcal{T}_{G/B}) = H^\bullet(G/B, \mathcal{L}(\bigwedge^\bullet \mf[u]))$, by Lemma \ref{cohom-support} we know that $V(\lambda) \subset H^\bullet(G/B, \mathcal{L}(\bigwedge^\bullet \mf[u]))$ forces 
$$
\lambda^\ast = -w_0\lambda \in W \sbt \{- \op{wt}(\bigwedge \nolimits^{\! \bullet} \mf[u])\}.
$$
Now, as $\mf[u]$ is associated to the negative roots, we have that $-\op{wt}(\bigwedge^\bullet \mf[u]) = \op{wt}(\bigwedge^\bullet \mf[n])$. By the proof of Corollary \ref{rhosquarechar} and the following line, these are precisely the weights appearing in $e^\rho \op{ch}(V(\rho))$. Thus, write $\lambda^\ast$ as 
$$
\lambda^\ast = w \sbt(\rho +\gamma)
$$
for some $w \in W$ and $\gamma \in \op{wt}(V(\rho))$. Then 
$$
\begin{aligned}
2\rho - \lambda^\ast &= 2\rho - \left( w \sbt( \rho +\gamma) \right) \\
&= 2\rho - \left( w(2\rho+\gamma)-\rho \right) \\
&= (2\rho - w(2\rho)) + (\rho - w(\gamma));
\end{aligned}
$$
this is in $\mathcal{Q}^+$, as $w(2\rho) \leq 2\rho$ for any $w$, and $w(\gamma) \leq \rho$ since $\gamma \in \op{wt}(V(\rho))$ (hence $w(\gamma) \in \op{wt}(V(\rho))$). Therefore $\lambda^\ast \leq 2\rho$, and since $\rho$ is self-dual (i.e. $\rho=\rho^\ast$) this also gives $\lambda \leq 2\rho$, as desired. 
\end{proof}

We can now give the proof of Proposition \ref{embeddingprop} of the introduction. 

\begin{proposition} \label{embedprop2}
For $G$ a simple, simply-connected complex algebraic group, we have $V(\rho) \otimes V(\rho) \hookrightarrow H^\bullet(G/B, \bigwedge^\bullet \mathcal{T}_{G/B})$. 
\end{proposition}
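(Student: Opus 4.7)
The plan is to compute the Euler--Poincar\'e characteristic $\chi_T(\mc[L](\bigwedge^\bullet \mf[u]))$ in closed form, recognize it as $\op{ch}(V(\rho)\otimes V(\rho))$ on the nose, and then read off the embedding from the fact that all cohomological multiplicities are non-negative.

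First I would compute $\chi_T(\mc[L](\bigwedge^\bullet \mf[u]))$ using Proposition \ref{Euler-char}. Since $\mf[u]$ is the span of the negative root spaces, $\op{ch}(\bigwedge^\bullet \mf[u]) = \prod_{\beta \in \Phi^+}(1+e^{-\beta})$, so $\diamond(\op{ch}(\bigwedge^\bullet \mf[u])) = \prod_{\beta \in \Phi^+}(1+e^\beta) = \op{ch}(\bigwedge^\bullet \mf[n])$. Feeding this into the Demazure formula and invoking Corollary \ref{keyremark} yields
\[
\chi_T(\mc[L](\bigwedge\nolimits^{\!\bullet} \mf[u])) \;=\; \diamond\!\left(D_{w_0}(\op{ch}(\bigwedge\nolimits^{\!\bullet} \mf[n]))\right) \;=\; \diamond(\op{ch}(V(\rho)\otimes V(\rho))).
\]
To dispense with the outer $\diamond$, I would use that $-w_0$ permutes the fundamental weights by the diagram involution, so $-w_0\rho = \rho$ and hence $V(\rho)$, and therefore $V(\rho)\otimes V(\rho)$, is self-dual. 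Consequently the character is $\diamond$-invariant, and the identity above collapses to $\chi_T(\mc[L](\bigwedge^\bullet \mf[u])) = \op{ch}(V(\rho)\otimes V(\rho))$.

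Finally, let $m_\lambda$ denote the multiplicity of $V(\lambda)$ in $V(\rho)\otimes V(\rho)$ and $m_{\lambda,i}$ the multiplicity of $V(\lambda)$ in $H^i(G/B,\mc[L](\bigwedge^\bullet \mf[u]))$. Because the irreducible characters $\{\op{ch}(V(\lambda))\}_{\lambda\in\dom}$ are linearly independent in $\Z[\mc[P]]$, unpacking the definition of $\chi_T$ as an alternating sum of characters gives
\[
m_\lambda \;=\; \sum_{i\geq 0}(-1)^i m_{\lambda,i}.
\]
Since each $m_{\lambda,i}\geq 0$ and $m_\lambda\geq 0$, the triangle inequality forces $\sum_i m_{\lambda,i} \geq m_\lambda$ for every $\lambda$. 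By semisimplicity of $H^\bullet(G/B,\bigwedge^\bullet \mathcal{T}_{G/B})$ as a $G$-module, these inequalities assemble into an injection $V(\rho)\otimes V(\rho) \hookrightarrow H^\bullet(G/B,\bigwedge^\bullet \mathcal{T}_{G/B})$ of $G$-modules, which is what we wanted.

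The main subtlety I anticipate is making sure the two applications of $\diamond$ in the Demazure formula actually cancel to produce $\op{ch}(V(\rho)\otimes V(\rho))$ and not the character of some twisted variant; this is precisely where self-duality of $V(\rho)$ (equivalently, the identity $\rho = -w_0\rho$ already invoked in the proof of Lemma \ref{supset}) is essential. Everything else is bookkeeping once Corollary \ref{keyremark} is in hand, and notably the Euler characteristic argument does not require any appeal to Lemma \ref{supset}, so Proposition \ref{embedprop2} is established unconditionally.
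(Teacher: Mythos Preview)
Your argument is correct and is essentially the paper's own proof: both compute $\chi_T(\bigwedge^\bullet \mathcal{T}_{G/B})$ via Proposition~\ref{Euler-char} and Corollary~\ref{keyremark} (equivalently Corollary~\ref{rhosquarechar}), invoke self-duality of $V(\rho)$ to remove the $\diamond$, and then use non-negativity of the cohomological multiplicities to upgrade the virtual identity to an embedding. Your closing observation that Lemma~\ref{supset} is not needed here also matches the paper's own remark following the proof.
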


\begin{proof}
First, we consider the character $\op{ch}(V(\rho) \otimes V(\rho))$. Since $\rho$ is self-dual, we have that 
$$
\op{ch}(V(\rho) \otimes V(\rho)) = \diamond \op{ch}(V(\rho) \otimes V(\rho)),
$$
where $\diamond$ is the involution on $\Z[\mathcal{P}]$ as in Section \ref{eulercharsec}. Therefore, by Corollary \ref{rhosquarechar} and Proposition \ref{Euler-char} we have that 
$$
\begin{aligned}
\op{ch}(V(\rho) \otimes V(\rho)) &= \diamond \op{ch}(V(\rho) \otimes V(\rho)) \\
&= \diamond D_{w_0} \left( \prod_{\beta \in \Phi^+} (1+ e^\beta) \right) \\
&= \diamond D_{w_0} \left( \diamond \prod_{\beta \in \Phi^+} (1+e^{-\beta}) \right) \\
&= \diamond D_{w_0} \left( \diamond \op{ch}(\bigwedge \nolimits^{\! \bullet} \mf[u]) \right) \\
&= \chi_T \left( \bigwedge \nolimits^{\! \bullet} \mathcal{T}_{G/B} \right)
\end{aligned}
$$
Reinterpreted, this says that as virtual modules (in the representation ring of $G$), we get that 
$$
V(\rho) \otimes V(\rho) = \sum_{i \geq 0} (-1)^i H^i(G/B, \bigwedge \nolimits^{\! \bullet} \mathcal{T}_{G/B});
$$
since $V(\rho) \otimes V(\rho)$ is an honest (not just virtual) module, all ``negative" irreducible components in the odd terms $(-1)^{2j+1} H^{2j+1}(G/B, \bigwedge^\bullet \mathcal{T}_{G/B})$ must cancel completely with components coming from the even terms $H^{2k}(G/B, \bigwedge^\bullet \mathcal{T}_{G/B})$. That is, we can find some subcollection $\{V(\nu_i)\}$ of irreducible components in the decomposition of $H^\bullet(G/B, \bigwedge^\bullet \mathcal{T}_{G/B})$ such that 
$$
V(\rho) \otimes V(\rho) \cong \bigoplus_{\nu_i} V(\nu_i) \subset H^\bullet(G/B, \bigwedge \nolimits^{\! \bullet} \mathcal{T}_{G/B}),
$$
as desired. 
\end{proof}

\begin{remark} 
We emphasize that in the above argument, the Euler--Poincar\'e characteristic is given by alternating sums in $H^i(G/B, \bigwedge^\bullet \mathcal{T}_{G/B})$ and \emph{not} in $HH^i(G/B)$, which is generally distinct as seen by the HKR isomorphism in Proposition \ref{HKRvec}. It would be interesting to find a representation-theoretic or algebraic formulation for the Euler--Poincar\'e characteristic with respect to the Hochschild grading.
\end{remark}

\begin{remark}
Note that the proof of Proposition \ref{embedprop2} does not rely on Lemma \ref{supset}; however, without the latter we could a priori have components in $H^\bullet(G/B, \bigwedge^\bullet \mathcal{T}_{G/B})$ of highest weight $\lambda \not \leq 2\rho$ which cancel in the Euler--Poincar\'e characteristic, so would not be reflected in $V(\rho) \otimes V(\rho)$. 
\end{remark}

\begin{example} 
\begin{enumerate}

\item[(a)] Let $G=SL_3(\C)$. Then using the algorithm implemented by Hemelsoet--Voorhaar \cite{HV} in Sage \cite{TSD} (or by the discussion below in Section \ref{furtherqs}), we find the following nontrivial cohomologies:
$$
\begin{aligned}
H^0(G/B, \bigwedge \nolimits^{\! 0} \mathcal{T}_{G/B}) &\cong V(0) = \C, \\
H^0(G/B, \bigwedge \nolimits^{\! 1} \mathcal{T}_{G/B}) &\cong V(\rho), \\
H^0(G/B, \bigwedge \nolimits^{\! 2} \mathcal{T}_{G/B}) &\cong V(\rho) \oplus V(3\varpi_1) \oplus V(3\varpi_2), \\
H^0(G/B, \bigwedge \nolimits^{\! 3} \mathcal{T}_{G/B}) &\cong V(2\rho).
\end{aligned}
$$
In this example, all higher cohomologies vanish, so that the Hochschild cohomology satisfies $HH^i(G/B) = H^0(G/B, \bigwedge^i \mathcal{T}_{G/B})$ for all $i$; this property is called \emph{Hochschild global} \cite{BS} or \emph{Hochschild affine} \cite{HV}. While rare among complete flag varieties, the proof of Proposition \ref{embedprop2} implies 
$$
V(\rho) \otimes V(\rho) \cong HH^\bullet(G/B)
$$
for all Hochschild global $G/B$, such as $SL_3(\C)/B$.

\item[(b)] Let $G$ be of type $D_4$. Then again by the algorithm of \cite{HV} we find that $H^\bullet(G/B, \bigwedge^\bullet \mathcal{T}_{G/B})$ has $652$ irreducible components. The only components appearing in higher cohomology are
$$
\begin{aligned}
H^1(G/B, \bigwedge \nolimits^{\! 3} \mathcal{T}_{G/B}) &= V(\varpi_2), \text{ and} \\
H^1(G/B, \bigwedge \nolimits^{\! 4} \mathcal{T}_{G/B}) &= V(2\varpi_2).
\end{aligned}
$$

One can also find (again using programs such as Sage or LiE) that the tensor product $V(\rho) \otimes V(\rho)$ has 648 irreducible components. Thus in the Euler--Poincar\'e calculation as in Proposition \ref{embedprop2}, the two irreducible components coming from higher cohomology are cancelled by copies appearing in $H^0(G/B, \bigwedge^\bullet \mathcal{T}_{G/B})$, accounting for the difference of four irreducible components between $H^\bullet(G/B, \bigwedge^\bullet \mathcal{T}_{G/B})$ and $V(\rho) \otimes V(\rho)$. All together,
$$
H^\bullet(G/B, \bigwedge \nolimits^{\! \bullet} \mathcal{T}_{G/B}) \cong \left(V(\rho) \otimes V(\rho) \right) \oplus V(\varpi_2)^{\oplus 2} \oplus V(2\varpi_2)^{\oplus 2}.
$$
\end{enumerate}
\end{example}

Finally, combining Lemma \ref{supset} and Proposition \ref{embedprop2} we deduce the following theorem. Recall that we denote by $m_\lambda$ the multiplicity of the representation $V(\lambda)$ in the tensor product $V(\rho) \otimes V(\rho)$. 

\begin{theorem} \label{maintheorem2}
Let $G$ be a simple, simply-connected complex algebraic group. Assuming the validity of Kostant's Conjecture \ref{Kconj}, for a dominant integral weight $\lambda \in \dom$ we have that 
$$
V(\lambda) \subset H^\bullet(G/B, \bigwedge \nolimits^{\! \bullet} \mathcal{T}_{G/B})
$$ 
if and only if $\lambda \leq 2\rho$ in the dominance order, and appears with nonzero multiplicity at least $m_\lambda$.  
\end{theorem}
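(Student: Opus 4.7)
The plan is to combine three ingredients already at hand: Lemma \ref{supset}, Proposition \ref{embedprop2}, and the assumed Kostant Conjecture \ref{Kconj}. The two directions of the biconditional will be handled separately, with the forward (``only if'') direction being immediate and independent of the conjecture; the reverse direction and the multiplicity bound will then follow by transporting Kostant's decomposition through the embedding of Proposition \ref{embedprop2}.

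First I treat the only if direction. Suppose $V(\lambda) \subset H^\bullet(G/B, \bigwedge^\bullet \mathcal{T}_{G/B})$. Lemma \ref{supset} applied directly yields $\lambda \leq 2\rho$ in the dominance order, with no further argument needed. This step rests only on the weight-support analysis of Lemma \ref{cohom-support}, together with the identification of $-\op{wt}(\bigwedge^\bullet \mathfrak{u})$ with the weights appearing in $e^\rho \op{ch}(V(\rho))$ that was extracted in the proof of Corollary \ref{rhosquarechar}; crucially, no appeal to Conjecture \ref{Kconj} is made here.

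Next I handle the if direction together with the multiplicity bound. Fix a dominant integral $\lambda \leq 2\rho$. Under the assumption of Conjecture \ref{Kconj}, the representation $V(\lambda)$ is a subrepresentation of $V(\rho) \otimes V(\rho)$, necessarily appearing with some multiplicity $m_\lambda \geq 1$. Proposition \ref{embedprop2} provides a $\mathfrak{g}$-equivariant embedding
$$
V(\rho) \otimes V(\rho) \hookrightarrow H^\bullet(G/B, \bigwedge \nolimits^{\! \bullet} \mathcal{T}_{G/B}),
$$
and composing with the $V(\lambda)$-isotypic inclusion $V(\lambda)^{\oplus m_\lambda} \hookrightarrow V(\rho) \otimes V(\rho)$ yields an embedding of $V(\lambda)^{\oplus m_\lambda}$ into the total cohomology. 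By semisimplicity of finite-dimensional $\mathfrak{g}$-modules, the $V(\lambda)$-isotypic component of $H^\bullet(G/B, \bigwedge^\bullet \mathcal{T}_{G/B})$ therefore has dimension at least $m_\lambda \cdot \dim V(\lambda)$, so that $V(\lambda)$ occurs with multiplicity $\geq m_\lambda$ as desired.

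There is no genuine obstacle remaining at this stage: the substantive analytic work has already been carried out in establishing Lemma \ref{supset} and Proposition \ref{embedprop2}, while Kostant's conjecture enters purely as a hypothesis used to guarantee the presence of $V(\lambda)$ in $V(\rho) \otimes V(\rho)$ for every $\lambda \leq 2\rho$. The only minor care required is verifying that multiplicities transfer through the embedding, which is automatic from semisimplicity. The final ``in particular'' clause, giving the result unconditionally for $G = SL_n(\C)$ and for $G$ of exceptional type, is then an immediate consequence of the known cases of Conjecture \ref{Kconj} from \cite{BZ} and the computational verifications reported in \cite{CKM}.
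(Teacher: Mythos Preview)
Your proof is correct and follows essentially the same approach as the paper: both use Lemma \ref{supset} for the ``only if'' direction and combine Conjecture \ref{Kconj} with the embedding of Proposition \ref{embedprop2} for the ``if'' direction and the multiplicity bound. Your version is slightly more explicit about invoking semisimplicity to transfer multiplicities through the embedding, but otherwise the arguments are the same.
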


\begin{proof}
By Lemma \ref{supset}, if $V(\lambda) \subset H^\bullet(G/B, \bigwedge^\bullet \mathcal{T}_{G/B})$ then necessarily $\lambda \leq 2\rho$. Then by Conjecture \ref{Kconj}, $V(\lambda)$ also appears in $V(\rho) \otimes V(\rho)$ with nonzero multiplicity $m_\lambda$, so that by the embedding of Proposition \ref{embedprop2} we get that $V(\lambda)$ appears with at least this multiplicity.

Conversely, if $\lambda \leq 2\rho$, then Conjecture \ref{Kconj} says that $V(\lambda)$ appears in $V(\rho) \otimes V(\rho)$ with nonzero multiplicity $m_\lambda$; again by Proposition \ref{embedprop2} we get that $V(\lambda)$ must appear in $H^\bullet(G/B, \bigwedge^\bullet \mathcal{T}_{G/B})$ with at least this multiplicity. 
\end{proof}

	\begin{remark} \label{SimpConn} 
	For $G$ arbitrary simple (not necessarily simply-connected), $V(\rho)$ might not be a representation of $G$. However, since all simple factors of $ H^\bullet(G/B, \bigwedge \nolimits^{\! \bullet} \mathcal{T}_{G/B})$ satisfy $\lambda \leq 2\rho$ by Lemma \ref{supset} (so are in the root lattice), Theorem \ref{maintheorem2} remains valid for all simple $G$.
		\end{remark}

\subsection{Further questions} \label{furtherqs}
We conclude with some final remarks and further questions of interest. For simplicity, we assume throughout this subsection that Conjecture \ref{Kconj} holds, for ease of statements. 

Recall that each $HH^i(G/B)$ is itself a $\mf[g]$-representation. While Theorem \ref{maintheorem2} gives a complete list of irreducible components in $HH^\bullet(G/B)$, it gives no concrete information on how these are distributed throughout the various graded pieces. In fact, we can further ask about the finer decomposition coming from the sheaf of polyvector fields:

 \begin{question} Which irreducible components appear in the various $HH^i(G/B)$? In $H^q(G/B, \bigwedge^p \mathcal{T}_{G/B})$?
  \end{question} 

\noindent Of course, understanding the second question suffices for understanding the first, by the HKR isomorphism. Using the standard Borel--Weil--Bott approach as in Lemma \ref{cohom-support}, we can constrain the highest weights $\lambda$ appearing in the latter cohomology groups by looking at the shifted action of elements $w \in W$ with a fixed length; however, this would give only potential candidates for irreducible components in $H^q(G/B, \bigwedge^p \mathcal{T}_{G/B})$ and does not guarantee these must appear (although Theorem \ref{maintheorem2} asserts they must appear \emph{somewhere} in the total cohomology). 

At the ``extreme" ends, these cohomologies are straightforward to compute: of course, $H^0(G/B, \bigwedge^0 \mathcal{T}_{G/B})$ is the trivial module and all higher cohomologies vanish, and $H^q(G/B, \mathcal{T}_{G/B})$ is fully described by Proposition \ref{BottProp}. At the other extreme, setting $n:= \op{dim}(G/B)$ we know that $\bigwedge^n \mathcal{T}_{G/B} \cong \mathcal{L}(2\rho)$ is a line bundle, so we can simply apply the Borel--Weil--Bott theorem. Finally, for the same $n$ using the isomorphism 
$$
\bigwedge \nolimits^{\! n-1} \mathcal{T}_{G/B} \cong \Omega_{G/B}^1 \otimes \omega_{G/B}^\vee \cong \Omega_{G/B}^1 \otimes \mathcal{L}(2\rho),
$$
a result of Wahl \cite{Wah}*{Proposition 3.9} shows that $H^q(G/B, \bigwedge^{n-1} \mathcal{T}_{G/B}) =0$ for $q \geq 1$ and 
$$
H^0(G/B, \bigwedge \nolimits^{\! n-1} \mathcal{T}_{G/B}) \cong \bigoplus_{\substack{\beta \in \Phi^+ \\ 2\rho-\beta \in \dom}} V(2\rho -\beta).
$$
The cohomologies for $2 \leq p \leq \op{dim}(G/B)-2$ and arbitrary $q$ are more subtle.

A second natural question is how much of the current work is adaptable to the setting of partial flag varieties $G/P$, where $B \subset P \subset G$ is a parabolic subgroup. For certain partial flag varieties--referred to as \emph{generalized Grassmannians}--this was considered by Belmans--Smirnov \cite{BS}. There, they give a precise representation-theoretic description of each $HH^i(G/P)$ when $G/P$ is either cominuscule or adjoint; in this geometrically advantageous setting, the cohomology is concentrated in $H^0(G/B, \bigwedge^i \mathcal{T}_{G/P})$. Further, the highest weight representations appearing in each $HH^i(G/P)$ in loc. cit. have a nice combinatorial description based on Kostant's work on Lie algebra cohomology \cite{Kos3}.

For arbitrary $G/P$, note that the weights of $\bigwedge^\bullet \mathcal{T}_{G/P}$ are sums of unique negative roots in $\Phi^- \backslash \Phi^-_{L}$, where $\Phi_L$ is the set of roots for the Levi component of $P$. The lowest weight in dominance order in this set is 
$$
\sum_{\beta \in \Phi^- \backslash \Phi^-_{L}} \beta =: -(2\rho - 2\rho_L),
$$
with $\rho_L$ being the half-sum of positive roots in $\Phi^+_L$; when $P=B$ and $\Phi_L=\varnothing$, this corresponds to $-2\rho$. Note that $2\rho-2\rho_L$ is a dominant integral weight. 

Unfortunately, it is \emph{not} the case that, for $\lambda \in \dom$ a dominant integral weight, $V(\lambda) \subset HH^\bullet(G/P)$ if and only if $\lambda \leq 2\rho-2\rho_L$ when $P \neq B$. Counterexamples exist already for Grassmannians; see for example \cite{BS}*{Table 5}, where $2\rho-2\rho_L = 4 \varpi_2$, and $2\varpi_2 \leq 4\varpi_2$ but $V(2\varpi_2) \not \subset HH^\bullet(G/P)$. Nevertheless, it is clear from the description in \cite{BS}*{Theorem B} that all $\lambda$ for which $V(\lambda)$ \emph{does} appear in $HH^\bullet(G/P)$ for cominuscule $G/P$ satisfy $\lambda \leq 2\rho-2\rho_L$; the failure then is the sufficiency of this condition. In the tensor decomposition problem, we expect dominance order to be far from sufficient for predicting irreducible components. We do not know of a suitable approach to understanding $HH^\bullet(G/P)$ generally via tensor product embeddings as in Proposition \ref{embedprop2}, and suspect that other methods would be more beneficial. With this goal in mind, we end with the following natural question which seeks a uniform perspective for Theorem \ref{maintheorem2} and the results of \cite{BS}.

\begin{question} Can the set $\{\lambda \in \dom: V(\lambda) \subset HH^\bullet(G/P)\}$ for general $G$ and $P$ be described combinatorially and uniformly?
\end{question}


\begin{thebibliography}{99}




\bibitem{BS}
	Belmans, P., Smirnov, M.,
	\newblock{\em Hochschild cohomology of generalized Grassmannians},
	\newblock Doc. Math. {\bf 28} (2023).


\bibitem{BZ}
	Berenstein, A.,  Zelevinsky, A.,
	\newblock{\em Triple multiplicities for $sl(r+1)$ and the spectrum of the exterior algebra of the adjoint representation}
	\newblock J. Algebraic Combin., {\bf 1} (1992).
	
	
\bibitem{Bo}
	Bott, R.,
	\newblock {\em Homogeneous vector bundles},
	\newblock Ann. Math. {\bf 66} (1957).


\bibitem{CRVdB1} 
	Calaque, D., Rossi, C.A., Van den Bergh, M.,
	\newblock{ \em C\u{a}ld\u{a}raru's conjecture and Tsygan's formality},
	\newblock Ann. Math. {\bf 176} (2012).
	
	
\bibitem{CRVdB2}
	Calaque, D., Rossi, C.A., Van den Bergh, M.,
	\newblock{ \em Hoschschild cohomology and Atiyah classes},
	\newblock Adv. Math. {\bf 224} (2010).






\bibitem{CKM}
	Chrivi, R., Kumar, S., Maffei, A.,
	\newblock{\em Components of $V(\rho) \otimes V(\rho)$},
	\newblock Transform. Groups {\bf 22} (2017).



\bibitem{Dem}
	Demazure, M.,
	\newblock{\em Une nouvelle formule de caract\'eres},
	\newblock Bull. Sci. Math., 2e S\'erie, {\bf 98} (3) (1974): 163--172. 






\bibitem{HV}
	Hemelsoet, N., Rik, V.,
	\newblock{\em A computer algorithm for the BGG resolution},
	\newblock J. Algebra {\bf 569} (2021).



\bibitem{HKR}
	Hochschild, G., Kostant, B., Rosenberg, A.,
	\newblock{\em Differential forms on regular affine algebras},
	\newblock Trans. Amer. Math. Soc. {\bf 102} (1962).







\bibitem{JK}
	Jeralds, S., Kumar, S.,
	\newblock{\em Components of $V(\rho) \otimes V(\rho)$ and dominant weight polyhedra for affine Kac--Moody Lie algebras},
	\newblock Transform. Groups (2023). 





\bibitem{Kac}
	Kac, V.
	\newblock ``Infinite dimensional Lie algebras," third edition
	\newblock Cambridge University Press, 1990



\bibitem{Kos2}
	Kostant, B.,
	\newblock {\em A formula for the multiplicity of a weight},
	\newblock Trans. Am. Math. Soc. {\bf 93} (1959). 


\bibitem{Kos3}
	Kostant, B.,
	\newblock{ \em Lie algebra cohomology and the generalized Borel--Weil theorem}, 
	\newblock Ann. Math. {\bf 74} (1961).


\bibitem{Kos}
	Kostant, B.,
	\newblock{\em Clifford algebra analogue of the Hopf--Koszul--Samelson theorem, the $\rho$-decomposition of $C(\mathfrak{g})= \End(V_\rho) \otimes C(P)$, and the $\mathfrak{g}$-module structure of $\bigwedge \mathfrak{g}$},
	\newblock Adv. Math {\bf 125} (1997).
	
	
	

\bibitem{Ku1}
	Kumar, S.,
	\newblock{\em Proof of the Parthasarathy-Ranga Rao-Varadarajan conjecture},
	\newblock Invent. Math. {\bf 93} (1988).



\bibitem{Ku2}
	Kumar, S.,
	\newblock``Kac-Moody Groups, their Flag Varieties, and Representation Theory,''
	\newblock{Progress in Mathematics (Birkh{\"a}user)},
	\newblock{Birkh{\"a}user Boston, MA 2002}.



\bibitem{LQ1}
	Lachowska, A., Qi, Y.,
	\newblock{\em The center of small quantum groups I: the principal block in type $A$},
	\newblock Int. Math. Res. Not. {\bf 2018} (2017).
	
\bibitem{LQ2}
	Lachowska, A., Qi, Y.,
	\newblock{\em The center of small quantum groups II: singular blocks},
	\newblock Proc. London Math. Soc. {\bf 118} (2019).




\bibitem{NP}
	Nadimpalli, S., Pattanayak, S.,
	\newblock{\em A note on branching of $V(\rho)$},
	\newblock J. Algebra {\bf 594} (2022). 


\bibitem{TSD}
	The Sage Developers, 
	\newblock SageMath, the Sage Mathematics Software System (Version 8.4), 
	\newblock {\url{https://www.sagemath.org}}


\bibitem{Wah}
	Wahl, J.,
	\newblock{ \em Gaussian maps and tensor products of irreducible representations,}
	\newblock Manuscripta Math. {\bf 73} (1991).


\end{thebibliography}
\end{document}